\documentclass[a4paper, 12pt]{article} 
\usepackage{amssymb,latexsym,color} 

\newtheorem{dref}{Definition}[section] \newtheorem{lemma}[dref]{Lemma}
\newtheorem{theo}[dref]{Theorem} \newtheorem{prop}[dref]{Proposition}
\newtheorem{remark}[dref]{Remark} 
\newtheorem{cor}[dref]{Corollary}

\newenvironment{proof}{\par\noindent{{\bf Proof.}}}{\hfill$\Box$
\medskip}

\newcommand{\ekv}[2]{\begin{equation}\label{#1}#2\end{equation}}
\newcommand{\eekv}[3]{\begin{eqnarray}\label{#1}#2 \\ #3
\nonumber\end{eqnarray}}
\newcommand{\eeekv}[4]{\begin{eqnarray}\label{#1}#2 \\ #3
\nonumber\\#4\nonumber\end{eqnarray}}
\newcommand{\eeeekv}[5]{\begin{eqnarray}\label{#1}#2 \\ #3
\nonumber\\#4\nonumber\\#5\nonumber\end{eqnarray}}

\newcommand{\no}[1]{(\ref{#1})} 

\title{Counting zeros of holomorphic functions of exponential growth} 
\author{Johannes
Sj{\"o}strand \footnote{Ce travail a b\'en\'efici\'e d'une aide de l'Agence Nationale de la Recherche
portant la r\'ef\'erence ANR-08-BLAN-0228-01}\\\small Institut de Math\'ematiques de Bourgogne,
Universit\'e de Bourgogne\\ 
\small 9 avenue Alain Savary - BP 47870\\ 
\small 21078 Dijon cedex\\ \footnotesize
johannes.sjostrand@u-bourgogne.fr\\
\footnotesize and
UMR 5584 du CNRS}  
\date{}
\begin{document}
\maketitle
\abstract{We consider the number of zeros of holomorphic functions in a bounded domain that depend on a small parameter and satisfy an exponential upper bound near the boundary of the domain and similar lower bounds at finitely many points along the boundary. Roughly the number of such zeros is $(2\pi h)^{-1}$ times the integral over the domain of the laplacian of the exponent of the dominating exponential. Such results have already been obtained by M.~Hager and by Hager and the author and they are of importance when studying the asymptotic distribution of eigenvalues of elliptic operators with small random perturbations. In this paper we generalize these results and arrive at geometrically natural statements and natural remainder estimates.

\medskip\par\centerline{{\bf R\'esum\'e}} 

\smallskip\par Nous \'etudions le nombre de z\'eros dans un domaine born\'e d'une fonction holomorphe d\'ependant d'un petit param\`etre et v\'erifiant une borne exponentielle sup\'erieure pr\`es du bord et des bornes exponentielles inf\'erieures dans un nombre fini de points pr\`es du bord. Approximativement le nombre de ces z\'eros est \'egale \`a $(2\pi h)^{-1}$ fois l'int\'egrale sur le domaine du laplacien appliqu\'e \`a l'exposant de la borne exponentielle sup\'erieure. 
De tels r\'esultats ont d\'ej\`a \'et\'e obtenus par M.~Hager et par Hager et l'auteur dans des \'etudes sur la distribution asymptotique des valeurs propres d'op\'erateurs elliptiques avec des petites perturbations al\'eatoires. Dans ce travail on g\'en\'eralise ces r\'esultats pour arriver \`a des \'enonc\'es g\'eom\'etriquement naturels avec des estimations du reste naturelles.
}

\medskip\par\noindent {\bf Keywords:} holomorphic, zeros, exponential growth.\\
{\bf MSC2010:} 30E99, 31A05.

\tableofcontents

\section{Introduction}\label{int}
\setcounter{equation}{0}

Starting with the works of M.~Hager \cite{Ha3,Ha1,Ha2} there has been a number of results (\cite{HaSj,Sj08a,Sj08b,DaHa09,Bo,BoSj}) that establish Weyl asymptotics of the eigenvalues of non-self-adjoint (pseudo)differential operators with small random perturbations, in the semi-classical limit and in the limit of large eigenvalues. A common feature here (as well as in many other non-self-adjoint spectral problems) is that one identifies the eigenvalues with the zeros of a holomorphic function $u(z)=u(z;h)$, $0<h\ll 1$ in a set $\Gamma \Subset {\bf C}$. The available information is an upper bound $|u(z;h)|\le \exp (\phi (z)/h)$ for $z$ near the boundary $\partial \Gamma $ as well as lower bounds 
$
|u(z_j;h)|\ge \exp (\phi (z_j)-\epsilon _j)/h
$, for finitely many points $z_j=z_j(h)$, $1\le j\le N(h)$ that are suitably distributed near the boundary. 

Hager \cite{Ha3,Ha1,Ha2} obtained a result of this type with the conclusion that the number of zeros in $\Gamma $ is $(2\pi h)^{-1}(\int_\Lambda \Delta \phi (z)L(dz)+o(1))$ in the limit of small $h$, when $\Gamma $ is independent of $h$
 with smooth boundary and $\phi $ is a $C^2$ function, also independent of $h$. In \cite{HaSj} we generalized this result by weakening the regularity assumptions on $\phi $. However, due to some logarithmic losses, we were not quite able to recover Hager's original result, and we still had a fixed domain $\Gamma $ with smooth boundary. 

In many spectral problems of the above type the domain should be allowed to depend on $h$, for instance, it could be a long thin rectangle, and the boundary regularity should be relaxed. 

In the present paper we have revisited systematically the proof of the counting proposition in \cite{HaSj} and obtained a general and quite natural result allowing an $h$ -dependent exponent $\phi $ to be merely continuous and the $h$-dependent domain $\Gamma $ to have Lipschitz boundary. The result generalizes the two earlier ones. By allowing suitable small changes of the points $z_j$ we also get rid of the logarithmic losses. The new results below allow some improvement in the spectral results of \cite{Sj08b} (see for instance \cite{Sj09}) and they seem to allow a better understanding of such results in general. We hope that future works will supply applications.

We next formulate the results. 
Let $\Gamma \Subset {\bf C}$ be an open set and let $\gamma =\partial
 \Gamma $ be the boundary of $\Gamma $. Let $r:\gamma \to ]0,\infty [$
 be a Lipschitz function of Lipschitz modulus $\le 1/2$:
\ekv{int.1}
{
\vert r(x)-r(y)\vert \le \frac{1}{2}\vert x-y\vert,\ x,y\in \gamma .
}
We further assume that $\gamma $ is Lipschitz in the following precise
sense, where $r$ enters:

There exists a constant $C_0$ such that for every $x\in \gamma $ there exist new affine coordinates $\widetilde{y}=(\widetilde{y}_1,\widetilde{y}_2)$ of the form $\widetilde{y}=U(y-x)$, $y\in {\bf C}\simeq {\bf R}^2$ being the old coordinates, where $U=U_x$ is orthogonal, such that the intersection of $\Gamma $ and the rectangle $R_x:= \{ y\in {\bf C} ;\, |\widetilde{y}_1|<r(x),\, |\widetilde{y}_2|<C_0r(x)\} $ takes the form \ekv{int.2} {\{ y\in R_x;\, \widetilde{y}_2>f_x(\widetilde{y}_1),\ |\widetilde{y}_1|<r(x)\} , } where $f_x(\widetilde{y}_1)$ is Lipschitz on $[-r(x),r(x)]$, with Lipschitz modulus $\le C_0$.

\par Notice that our assumption (\ref{int.2}) remains valid if we
decrease $r$. It will be convenient to extend the function to all of
${\bf C}$, by putting 
\ekv{int.3}
{
r(x)=\inf_{y\in \gamma }(r(y)+\frac{1}{2}|x-y|).
}
The extended function is also Lipschitz with modulus $\le
\frac{1}{2}$: 
$$
|r(x)-r(y)|\le \frac{1}{2}|x-y|,\ x,y\in {\bf C}.
$$
Notice that 
\ekv{int.4}
{
r(x)\ge \frac{1}{2}\mathrm{dist\,}(x,\gamma ),
}
and that 
\ekv{int.5}
{
|y-x|\le r(x)\Rightarrow \frac{r(x)}{2}\le r(y)\le \frac{3r(x)}{2}.
}

\par For simplicity, we shall also assume that $\Gamma $ is simply connected. The complete version of our result is: 
\begin{theo}\label{int1}
Let $\Gamma \Subset {\bf C}$ be simply connnected and have Lipschitz boundary $\gamma $ with an
associated Lipschitz weight $r$ as in (\ref{int.1}), (\ref{int.2}), 
(\ref{int.3}).
 Put $\widetilde{\gamma }_{\alpha r}=\cup_{x\in \gamma
}D(x,\alpha r(x))$ for any constant $\alpha >0$. Let $z_j^0\in \gamma $, $j\in {\bf Z}/N{\bf Z} $ be distributed along 
the  boundary in the positively oriented sense such that 
$$r(z_j^0)/4\le |z_{j+1}^0-z_j^0|\le r(z_j^0)/2 .$$ (Here ``4'' can be
replaced by any fixed constant $>2$.) 
Then there exists a constant
$C_1>0$ depending only on the constant $C_0$ in the assumption around (\ref{int.2}) such that if $z_j\in D(z_j^0,r(z^0_j)/(2C_1))$ we have the following:

\par Let $0<h\le 1$ and let $\phi $ be a continuous subharmonic function on $\widetilde{\gamma }_{r}$ with a distribution extension to $\Gamma \cup \widetilde{\gamma }_{r}$ that will be denoted by the same symbol. Then there exists a constant $C_2>0$ such that if $u$ is a holomorphic function on $\Gamma \cup \widetilde{\gamma }_{r}$ satisfying \ekv{int.6} { h\ln |u|\le \phi (z)\hbox{ on }\widetilde{\gamma }_{r}, } \ekv{int.7} { h\ln |u(z_j)|\ge \phi (z_j)-\epsilon _j,\hbox{ for }j=1,2,...,N, } where $\epsilon _j\ge 0$, then the number of zeros of $u$ in $\Gamma $ satisfies \eekv{int.8} { &&|\# (u^{-1}(0)\cap \Gamma )-\frac{1}{2\pi h}\mu (\Gamma )|\le } {&& \frac{C_2}{h}\left( \mu (\widetilde{\gamma }_{r})+\sum_1^N \left(\epsilon _j+\int _{D(z_j,\frac{r(z_j)}{4C_1})}|\ln \frac{|w-z_j|}{r(z_j)}|\mu (dw)\right) \right).  } Here $\mu :=\Delta \phi \in {\cal D}'(\Gamma \cup \widetilde{\gamma }_{r})$ is a positive measure on $\widetilde{\gamma }_{r}$ so that $\mu (\Gamma )$ and $\mu (\widetilde{\gamma }_{r})$ are well-defined. Moreover, the constant $C_2$ only depends on $C_0$ in (\ref{int.2}).  \end{theo}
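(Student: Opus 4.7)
The plan is to apply Green's formula to a well-chosen smooth cutoff $\chi$ supported in $\Gamma\cup\widetilde{\gamma}_r$ equal to $1$ on $\Gamma\setminus\widetilde{\gamma}_{r/C}$, and to control the resulting boundary-layer integral by the non-negative function $v := \phi - h\log|u|$. First I would construct $\chi$ using the Lipschitz structure of $\gamma$ from (\ref{int.2}): via a partition of unity subordinate to a cover of $\gamma$ by the rectangles $R_{z_j^0}$ (equivalently by the disks $D(z_j^0, r(z_j^0))$) one obtains a smooth $\chi$ with $|\nabla\chi| \lesssim 1/r$, $|\Delta\chi|\lesssim 1/r^2$, and $\mathrm{supp}(\nabla\chi)\subset\widetilde{\gamma}_r$, all with constants depending only on $C_0$.

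Using $\Delta\log|u|=2\pi\sum_k\delta_{a_k}$ and $\Delta\phi=\mu$, distributional integration by parts yields the identity
$$\sum_k \chi(a_k) - \frac{1}{2\pi h}\int \chi\, d\mu \;=\; -\frac{1}{2\pi h}\int v\,\Delta\chi\, dA.$$
On the left-hand side, $\sum_k\chi(a_k)$ differs from $N$ only by zeros of $u$ in the transition annulus $\{0<\chi<1\}$, and $\int\chi\, d\mu$ differs from $\mu(\Gamma)$ by at most $\mu(\widetilde{\gamma}_r)$. The first discrepancy I would handle by Jensen's formula: applied on each disk $D(z_j,\rho_j)$ with $\rho_j:=r(z_j)/(4C_1)$, and combined with the upper bound (\ref{int.6}) on the circle and the lower bound (\ref{int.7}) at the center, it gives
$$\log 2 \cdot \#(u^{-1}(0)\cap D(z_j,\rho_j/2)) \;\leq\; \frac{\epsilon_j}{h} + \frac{1}{2\pi h}\int_{D(z_j,\rho_j)}\log\frac{\rho_j}{|w-z_j|}\,\mu(dw),$$
which is already of the required form. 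The constant $C_1$ is fixed so that the half-disks $D(z_j,\rho_j/2)$ cover the transition annulus; this is where the hypothesis $z_j\in D(z_j^0, r(z_j^0)/(2C_1))$, the spacing condition on the $z_j^0$, and (\ref{int.5}) all enter.

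The main obstacle is bounding $(2\pi h)^{-1}|\int v\,\Delta\chi\, dA|$ by the same expression. For this I would use the subharmonicity of $\log|u|$: the area mean value inequality on $D(z_j,\rho_j)$ gives
$$\frac{h}{\pi\rho_j^2}\int_{D(z_j,\rho_j)}\log|u|\, dA \;\geq\; h\log|u(z_j)| \;\geq\; \phi(z_j)-\epsilon_j.$$
Since $h\log|u|=\phi-v$, comparing with the mean of $\phi$ over the same disk, which by the Riesz representation equals $\phi(z_j)+(2\pi)^{-1}\int K_{\rho_j}(w-z_j)\,\mu(dw)$ with radial kernel $K_\rho(\zeta)=\log(\rho/|\zeta|)-1/2+|\zeta|^2/(2\rho^2)$ for $|\zeta|\leq\rho$ and $0$ otherwise (a direct polar-coordinate computation), yields
$$\frac{1}{\pi\rho_j^2}\int_{D(z_j,\rho_j)}v\, dA \;\leq\; \epsilon_j + \frac{1}{2\pi}\int_{D(z_j,\rho_j)}\log\frac{\rho_j}{|w-z_j|}\,\mu(dw).$$
Summing over $j$ and using $|\Delta\chi|\lesssim 1/r^2$ together with a finite-overlap cover of $\mathrm{supp}(\Delta\chi)$ by the disks $D(z_j,\rho_j)$ completes the estimate.

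The hardest part is arguably the first step: constructing $\chi$ and the associated covering so that all bounds depend only on the Lipschitz constant $C_0$, and verifying that a single family of disks $D(z_j,\rho_j)$ serves both the interior mean value estimate and the Jensen-type count of zeros in the transition annulus. Once this geometric setup is in place, the two analytic ingredients combine transparently to give (\ref{int.8}).
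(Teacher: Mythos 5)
Your overall skeleton (the identity $\sum_k\chi(a_k)-\frac{1}{2\pi h}\int\chi\,d\mu=-\frac1{2\pi h}\int v\,\Delta\chi\,dA$ with $v=\phi-h\ln|u|\ge 0$ on $\widetilde\gamma_r$, plus a Jensen count in the transition layer) is reasonable, and the mean-value/Riesz computation controlling the average of $v$ over $D(z_j,\rho_j)$ is correct. The gap is the covering claim on which everything rests: the disks $D(z_j,\rho_j)$, $\rho_j=r(z_j)/(4C_1)$, and a fortiori the half-disks $D(z_j,\rho_j/2)$, cannot cover $\mathrm{supp}(\Delta\chi)$ or the transition annulus. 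The hypothesis forces consecutive sample points to satisfy $|z_{j+1}^0-z_j^0|\ge r(z_j^0)/4$, while your disks have radius $r(z_j)/(4C_1)$ with $C_1$ a constant that the geometry (not you) dictates and that is in general large; moreover the $z_j$ may wander anywhere in $D(z_j^0,r(z_j^0)/(2C_1))$. So between consecutive sample points there are gaps of size comparable to $r$ where you have no covering disk, and on those gaps you know only $v\ge 0$, which gives no upper bound on $\int v|\Delta\chi|$; note also that no choice of a single constant $C_1$ repairs this, since enlarging the disks by taking $C_1$ small simultaneously enlarges the allowed displacement of the $z_j$. Even if you enlarge the averaging disks to the maximal admissible radius $\asymp r(z_j)$ (beyond which they leave $\widetilde\gamma_r$, where (\ref{int.6}) and the subharmonicity of $\phi$ are available, so both the mean-value inequality for $h\ln|u|$ and the Riesz formula for $\phi$ break down), the covering can still fail when the Lipschitz constant $C_0$ is large: between two consecutive sample points at Euclidean distance $\le r/2$ the boundary arc may bulge transversally by a distance of order $C_0\,r$ (and the spacing condition does not force samples onto such a ``tooth''), so the union of admissible disks centered at the $z_j$ need not contain any neighborhood of $\gamma$, hence cannot contain $\mathrm{supp}(\nabla\chi)$ for any legitimate cutoff. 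The same defect breaks your Jensen step, which needs every zero in the transition layer to lie in some $D(z_j,\rho_j/2)$.

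What is missing is precisely a mechanism that propagates the pointwise control at the sparse points $z_j$ to the whole boundary layer, and this is where the paper's proof differs essentially from yours. The paper works in a smoothed thin neighborhood $\gamma_r$ of $\gamma$, proves exponentially weighted estimates for the Dirichlet Laplacian there (Proposition \ref{a1}) and deduces bounds for the Green kernel $G_{\gamma_r}$: a logarithmic bound near the diagonal, exponential decay $\exp(-\frac1C\int\frac{|dt|}{r})$ along the layer, and a lower bound on ``elementary pieces'' of size $\asymp r$. It then introduces the harmonic majorants $\Phi=\phi+\int(-G)\mu$ and $\Psi=h\ln|u|+\int(-G)hn_u$, shows $H=\Phi-\Psi\ge 0$ by the maximum principle, bounds $H(z_j)$ by $\epsilon_j+\int(-G)(z_j,w)\mu(dw)$ using (\ref{int.7}), and spreads this bound over each elementary piece by Harnack's inequality; the Green-kernel decay then localizes all interactions to neighboring pieces. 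This is exactly the control of $v$ on the uncovered part of the layer that your mean-value argument cannot supply; without an analogue of the $H$/Harnack/Green-kernel step (or some other way to estimate $\phi-h\ln|u|$ away from the $z_j$), the bound on $\int v\,\Delta\chi\,dA$ does not follow.
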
 By observing that the average of $|\ln \frac{|w-z_j|}{r(z_j)}|$ with respect to the Lebesgue measure $L(dz_j)$ over $D(z_j^0,\frac{r(z_j^0)}{2C_1})$ is ${\cal O}(1)$, we can get rid of the logarithmic terms in Theorem \ref{int1}, to the price of making a suitable choice of $z_j=\widetilde{z}_j$, and we get: \begin{theo}\label{int2} Let $\Gamma $, $\gamma =\partial \Gamma $, $r$, $z_j^0$, $C_0$, $C_1$, $\phi $ be as in Theorem \ref{int1}. Then $\exists\,\widetilde{z}_j\in D(z_j^0,\frac{r(z_j^0)}{2C_1})$ such that if $h$, $u$ are as in Theorem \ref{int1}, satisfying (\ref{int.6}), and \ekv{int.9} { h\ln |u(\widetilde{z}_j)|\ge \phi (\widetilde{z}_j)-\epsilon _j,\ j=1,2,...,N, } instead of (\ref{int.7}), then \ekv{int.10} { |\# (u^{-1}(0)\cap \Gamma )-\frac{1}{2\pi h}\mu (\Gamma )| \le \frac{C_2}{h}(\mu (\widetilde{\gamma }_{r})+\sum \epsilon _j ).  } \end{theo}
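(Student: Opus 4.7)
The plan is to derive Theorem \ref{int2} from Theorem \ref{int1} by an averaging argument. The points $\widetilde z_j \in D_j^0 := D(z_j^0, r(z_j^0)/(2C_1))$ will be selected in advance, depending only on $\mu=\Delta\phi$ and not on $u$, so that once they are fixed Theorem \ref{int1} can be applied with $z_j=\widetilde z_j$ to any admissible $u$; the hypothesis (\ref{int.9}) is then exactly (\ref{int.7}) for the chosen points. What the averaging is meant to achieve is that each logarithmic integral appearing in (\ref{int.8}) is dominated by a constant times $\mu$ of a small neighborhood of $z_j^0$, and that the sum of those neighborhoods has total $\mu$-mass comparable to $\mu(\widetilde\gamma_r)$.

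The key computation is that for each fixed $w$, the Lebesgue mean of $|\ln(|w-z|/r(z))|\cdot\mathbf 1_{|w-z|<r(z)/(4C_1)}$ over $z\in D_j^0$ is a constant depending only on $C_0$: the comparison $r(z)\asymp r(z_j^0)$ on $D_j^0$ from (\ref{int.5}) combined with a rescaling $\zeta=(w-z)/r(z_j^0)$ reduces the $L(dz)$-integral to that of $|\ln|\zeta||$ over a fixed disc, which is finite. Fubini then gives
\[
 \frac{1}{L(D_j^0)}\int_{D_j^0}\!\int_{D(z,r(z)/(4C_1))}\!\Bigl|\ln\frac{|w-z|}{r(z)}\Bigr|\,\mu(dw)\,L(dz) \le C\,\mu(B_j),
\]
where $B_j := D(z_j^0, r(z_j^0))$ contains the $w$-support of the inner integral for every $z\in D_j^0$ (provided $C_1$ is not too small, which we may assume). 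A mean-value argument, applied independently for each $j$, then supplies a $\widetilde z_j\in D_j^0$ for which the $j$-th log-integral in (\ref{int.8}) is bounded by $2C\,\mu(B_j)$.

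To close the argument it remains to show $\sum_j\mu(B_j)=\mathcal O(\mu(\widetilde\gamma_r))$. Each $B_j$ lies in $\widetilde\gamma_r$ because $z_j^0\in\gamma$, and the spacing condition $r(z_j^0)/4\le|z_{j+1}^0-z_j^0|\le r(z_j^0)/2$, combined with the Lipschitz property (\ref{int.5}) of $r$ extended by (\ref{int.3}), implies that the covering $\{B_j\}$ has multiplicity bounded solely in terms of $C_0$. Substituting the resulting estimate into the right-hand side of (\ref{int.8}) absorbs the entire sum of log-integrals into $C''\,\mu(\widetilde\gamma_r)/h$, and (\ref{int.10}) follows.

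The step I anticipate to be most delicate is the bounded-overlap claim for the enlarged balls $B_j$: because $\gamma$ is only Lipschitz and can in principle come close to itself at small scales, controlling the multiplicity of $\{B_j\}$ requires using both the spacing hypothesis and the global Lipschitz-modulus-$1/2$ property of $r$, which, via the extension (\ref{int.3}), forces $r(z_j^0)$ to shrink whenever $\gamma$ approaches itself. By contrast, the Fubini/scaling computation and the independent mean-value choice of each $\widetilde z_j$ are routine once $r(z)\asymp r(z_j^0)$ on $D_j^0$ has been recorded.
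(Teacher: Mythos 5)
Your proposal is correct and follows essentially the same route as the paper: average the logarithmic integral over $z\in D(z_j^0,\frac{r(z_j^0)}{2C_1})$, use Fubini and $r(z)\asymp r(z_j^0)$ to bound the mean by ${\cal O}(1)\mu$ of a slightly larger disc centred at $z_j^0$, choose $\widetilde{z}_j$ (depending only on $\mu$, not on $u$) by a mean-value argument, and sum over $j$ using bounded overlap of the enlarged discs to absorb the logarithmic terms into ${\cal O}(1)\mu (\widetilde{\gamma }_{r})$, after which Theorem \ref{int1} with $z_j=\widetilde{z}_j$ gives (\ref{int.10}). The only differences are cosmetic: the paper enlarges to $D(z_j^0,\frac{r(z_j^0)}{C_1})$ rather than $D(z_j^0,r(z_j^0))$ and leaves the bounded-overlap step implicit, which you rightly make explicit, though its real source is the graph condition (\ref{int.2}) at scale $r$ (guaranteeing $\gamma $ is a single Lipschitz graph in each rectangle $R_x$) together with the spacing hypothesis, rather than (\ref{int.1}), (\ref{int.3}) alone.
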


\par Of course, if we already know that 
\ekv{int.11}
{
\int_{D(z_j,\frac{r(z_j)}{4C_1})}|\ln \frac{|w-z_j|}{r(z_j)}| \mu (dw)=
{\cal O}(1)\mu (D(z_j,\frac{r(z_j)}{4C_1})),
}
then we can keep $\widetilde{z}_j=z_j$ in (\ref{int.8}) and get
(\ref{int.10}). This is the case, if we assume that $\mu $ is
equivalent to the Lebesgue measure $L(d\omega )$ in the following sense:
\eekv{int.12}
{
&&\frac{\mu (dw)}{\mu (D(z_j,\frac{r(z_j)}{4C_2}))} \asymp 
\frac{L (dw)}{L (D(z_j,\frac{r(z_j)}{4C_2}))}
\hbox{ on }D(z_j,\frac{r(z_j)}{4C_2}),}
{&&\hbox{uniformly for }j=1,2,...,N.
}
Then we get,
\begin{theo}\label{d3}
Make the assumptions of Theorem \ref{int1} as well as (\ref{int.11}) or the
stronger assumption (\ref{int.12}). Then from (\ref{int.6}), (\ref{int.7}),
we conclude (\ref{int.10}).
\end{theo}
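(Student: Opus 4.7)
The plan is to reduce Theorem \ref{d3} to Theorem \ref{int1}. Under (\ref{int.11}), each logarithmic integral on the right-hand side of (\ref{int.8}) is dominated by the $\mu$-mass of the corresponding disk $D_j := D(z_j, r(z_j)/(4C_1))$, and the sum of these $\mu$-masses will be controlled by a bounded-overlap argument. I first verify that (\ref{int.12}) implies (\ref{int.11}), then complete the summation step to derive (\ref{int.10}).

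To see (\ref{int.12}) $\Rightarrow$ (\ref{int.11}), possibly after a harmless adjustment of the constants so that the disks in the two statements match, observe that on $D_j$ the measure $\mu$ is comparable to $(\mu(D_j)/L(D_j))\, L$, whence
$$
\int_{D_j} \left|\ln \frac{|w-z_j|}{r(z_j)}\right| \mu(dw) \le O(1)\, \frac{\mu(D_j)}{L(D_j)} \int_{D_j} \left|\ln \frac{|w-z_j|}{r(z_j)}\right| L(dw).
$$
Rescaling $w = z_j + r(z_j)\zeta$, the Lebesgue integral on the right equals $r(z_j)^2 \int_{|\zeta|<1/(4C_1)} |\ln|\zeta||\, L(d\zeta)$, a finite constant depending only on $C_1$ times $L(D_j)$. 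Hence (\ref{int.11}) holds.

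Now assume (\ref{int.11}). Substituting into (\ref{int.8}), the counting error is bounded by
$$
\frac{C_2}{h}\left(\mu(\widetilde{\gamma}_r) + \sum_j \epsilon_j + O(1) \sum_j \mu(D_j)\right),
$$
and it remains to show $\sum_j \mu(D_j) = O(1)\, \mu(\widetilde{\gamma}_r)$. Since $|z_j - z_j^0| \le r(z_j^0)/(2C_1) \le r(z_j^0)$, (\ref{int.5}) gives $r(z_j) \asymp r(z_j^0)$ and $D_j \subset D(z_j^0, r(z_j^0)) \subset \widetilde{\gamma}_r$. Consecutive centers $z_j^0$ are separated by distance $\ge r(z_j^0)/4$, while the radii of the $D_j$ are $\sim r(z_j^0)/(4C_1)$, smaller by a factor of order $1/C_1$. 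Hence the family $\{D_j\}$ has multiplicity bounded by some $M = M(C_0)$, giving $\sum_j \mu(D_j) \le M\, \mu(\widetilde{\gamma}_r)$ and (\ref{int.10}).

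The main technical obstacle is the bookkeeping of the various radius constants $C_1, C_2$ appearing in the nested disks $r(z_j)/(4C_1)$, $r(z_j^0)/(2C_1)$, and $r(z_j)/(4C_2)$ of (\ref{int.8}), the perturbation hypothesis, and (\ref{int.12}); one must choose the constants jointly (enlarging them if necessary) so that (\ref{int.12}) governs exactly the disks that appear in (\ref{int.8}) and so that the bounded-overlap property survives the perturbation $z_j^0 \mapsto z_j$. Beyond this, the analytic content is routine: the logarithm has a scale-invariant integrable singularity on disks, and the packing argument is standard.
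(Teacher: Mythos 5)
Your proposal is correct and follows essentially the same route as the paper, which simply keeps $\widetilde{z}_j=z_j$ in (\ref{int.8}), uses (\ref{int.11}) to replace each logarithmic integral by ${\cal O}(1)\mu (D(z_j,\frac{r(z_j)}{4C_1}))$, sums using the bounded overlap of these discs inside $\widetilde{\gamma }_r$, and notes that (\ref{int.12}) implies (\ref{int.11}) via the local integrability of the logarithm. Your rescaling verification and the packing/constant bookkeeping are exactly the details the paper leaves implicit.
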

In particular, we recover the counting proposition of M.~Hager
\cite{Ha1, Ha2}, where $\Gamma $, $\phi $ are independent of $h$, $\gamma $ of class $C^\infty $ and $\phi \in C^\infty (\mathrm{neigh\,}(\gamma ))$ (and the replacement of ``$\infty $" by ``2'' is straight forward). Then $\mu
\asymp L$ and if we choose $r\ll 1$ constant and assume (\ref{int.6}),
(\ref{int.7}), we get from (\ref{int.9}):
\ekv{int.13}
{
|\# (u ^{-1}(0)\cap \Gamma )-\frac{1}{2\pi h}\mu (\Gamma )|
\le
\frac{\widetilde{C}}{h}(r+\sum_1^N \epsilon _j ).
}
Hager had $\epsilon _j=\epsilon $ independent of $j$,
$r=\sqrt{\epsilon }$, $N\asymp \epsilon ^{-1/2}$, so the remainder in
(\ref{int.13}) is ${\cal O}(\frac{\sqrt{\epsilon }}{h})$. The counting proposition in \cite{HaSj} can also be recovered.

\par There has been a considerable activity in the study of the zero set of random holomorphic functions where the Edelman Kostlan formula has similarities with the above results (and the earlier ones by Hager and others metnioned above) and where many further results have been obtained. See M.~Sodin \cite{So00}, Sodin, B.~Tsirelson \cite{SoTs05}, S.~Zrebiec \cite{Zr07}, B.~Shiffman, S.~Zelditch, S.~Zrebiec \cite{ShZeZr08}. The two last papers deal with holomorphic functions of several variables and it would be interest to see if our results also have extensions to the case of several variables. Our results are similar in spirit to classical results on zeros of entire functions, see Levin \cite{Le80}.

The outline of the  paper is the following:

\par In Section \ref{cz} we consider thin neighborhoods of the boundary where the width is variable and determined by the function $r$. We verify that we can find such neighborhoods with smooth boundary and estimate the derivatives of the boundary defining function. Then we develop some exponentially weighted estimates for the Laplacian in such domains in the spirit of what can be done for the Schr\"odinger equation (\cite{HeSj84}) and a large number of works in thin domains, see for instance \cite{GrJe96, BoEx04}. From that we also deduce pointwise estimates on the corresponding Green kernel.

\par In Section \ref{di} we prove the main results by following the general strategy of the proof of the corresponding result in \cite{HaSj} and carry out the averaging argument that leads to the elimination of the logarithms.

\par In Section \ref{ap} we consider as a simple illustration the zeros of sums of exponentials of holomorphic functions. These results can also be obtained with more direct methods, cf \cite{Da03, BlMa06, HiSj3a}.

Finally in Section \ref{ent} we establish a connection with classical results on zeros of entire functions.

\smallskip\par\noindent 
{\bf Acknowledgement} We have benefitted from interesting discussions with Scott Zrebiec at the Mittag-Leffler institute during the special program on complex analysis in the Spring of 2008.

\section{Thin neighborhoods of the boundary and weighted estimates} \label{cz}
\setcounter{equation}{0}

Let $\Gamma $, $\gamma =\partial \Gamma $, $r$ be as in the introduction.

Using a locally finite covering with discs $D(x,r(x))$ and a
subordinated partition of unity, it is standard to find a smooth function
$\widetilde{r}(x)$ satisfying 
\ekv{a.6}
{
\frac{1}{C}r(x)\le \widetilde{r}(x)\le r(x),\ |\nabla
\widetilde{r}(x)|\le \frac{1}{2},\ \partial ^\alpha
\widetilde{r}(x)={\cal O}(\widetilde{r}^{1-|\alpha |}),
}
where $C>0$ is a universal constant.
\par From now on, we replace $r(x)$ by $\widetilde{r}(x)$ 
and the drop the
tilde. (\ref{int.1}), (\ref{int.2}), (\ref{int.5}) remain valid and
(\ref{int.4}) remains valid in the weakened form:
$$
r(x)\ge \frac{1}{C}\mathrm{dist\,}(x,\gamma ),
$$
where $C>0$ is new constant.

\par Consider the signed distance to $\gamma $:
\ekv{a.7}
{
g(x)=\cases{\mathrm{dist\,}(x,\gamma ),\ x\in \Gamma \cr
  -\mathrm{dist\,}(x,\gamma ),\ x\in {\bf C}\setminus \Gamma }
}
Possibly after replacing $r$ by a small constant multiple of $r$ we deduce
from (\ref{int.2}) that for every $x\in \gamma $ there exists a
normalized constant real vector field $\nu =\nu _x$ (namely $\partial _{\widetilde{y}_1}$, cf. (\ref{int.2})) such that 
\ekv{a.8}
{
\nu (g)\ge \frac{1}{C} \hbox{ in }R_x,\quad C=(1+C_0^2)^{-1/2}. 
}

\par In the set $\cup_{x\in \gamma }R_x$, we consider the
regularized function 
\ekv{a.9}
{
g_{\epsilon }(x)=\int \frac{1}{(\epsilon r(x))^2}\chi
(\frac{x-y}{\epsilon r(x)})g(y) L(dy),
}
where $0\le \chi \in C_0^\infty (D(0,1))$, $\int \chi
(x)L(dx)=1$. Here $\epsilon >0$ is small and we notice that
$r(x)\asymp r(y)$, $g(y)={\cal O}(r(y))$, when $\chi ((x-y)/(\epsilon
r(x))\ne 0$. It follows that $g_\epsilon (x)={\cal O}(r(x))$ and more
precisely, since $g$ is Lipschitz, that 
\ekv{a.10}
{
g_\epsilon (x)-g(x)={\cal O}(\epsilon r(x)).
}
Differentiating (\ref{a.9}), we get 
\eekv{a.11}
{\nabla _xg_\epsilon (x)&=&(\nabla _xg)_\epsilon +
2 \int \frac{-\nabla r(x)}{\epsilon ^2r(x)^2}\chi (\frac{x-y}{\epsilon r(x)})\frac{g(y)}{r(x)}L(dy)
}
{&&+\int \frac{1}{\epsilon ^2r(x)^2}\chi '(\frac{x-y}{\epsilon
    r(x)})\cdot \frac{x-y}{\epsilon r(x)}(-\nabla
  r(x))\frac{g(y)}{r(x)}L(dy),}
where $(\nabla _xg)_\epsilon $ is defined as in (\ref{a.9}) with $g$
replaced by $\nabla _xg$. It follows that 
\ekv{a.12}{\nabla _xg_\epsilon (x)-(\nabla g)_\epsilon (x)={\cal O}(1)
\sup_{y\in D(x,\epsilon r(x))}\frac{|g(y)|}{r(x)}.}
In particular, $\nabla _xg_\epsilon ={\cal O}(1)$ and with $\nu =\nu
_x$:
\ekv{a.13}
{
\nu (g_\epsilon )(y)\ge \frac{1}{2C},\hbox{ when }y\in R_x,\hbox{ and }
\sup_{|z-y|\le \epsilon r(y)}|g(z)|\ll r(y).
}
Differentiating (\ref{a.11}) further, we get 
\ekv{a.14}
{
\partial ^\alpha g_\epsilon (x)={\cal O}_{\alpha }((\epsilon
r(x))^{1-|\alpha |}),\ |\alpha |\ge 1.
}

\par Let $C>0$ be large enough but independent of $\epsilon $. Put 
\ekv{a.15}
{
\widehat{\gamma }_{\epsilon ,C\epsilon r}=\{ x\in \cup_{y\in \gamma
}R_y;\, |g_\epsilon (x)|<C\epsilon r(x)\}.
}

\par If $C>0$ is sufficiently large, then in the coordinates
associated to (\ref{int.2}), $\widehat{\gamma }_{\epsilon ,C\epsilon r}$ takes
the form
\ekv{a.16}
{
f_x^-(\widetilde{y}_1)<\widetilde{y}_2<f_x^+(\widetilde{y}_1),\ |\widetilde{y}_1|<r(x),
}
where $f^\pm _x$ are smooth on $[-r(x),r(x)]$ and satisfy 
\ekv{a.17}
{
\partial _{\widetilde{y}_1}^kf_x^\pm ={\cal O}_k((\epsilon
r(x))^{1-k}),\ k\ge 1,
}
\ekv{a.18}
{
0< f_x^+-f_x,\, f_x-f_x^-\asymp C\epsilon r(x).
}
Later, we will fix $\epsilon >0$ small enough and write $\gamma 
_r=\widehat{\gamma }_{\epsilon ,C\epsilon r}$ and more generally, $\gamma 
_{\alpha r}=\widehat{\gamma }_{\epsilon ,C\epsilon \alpha r}$.

\par We shall next establish an exponentially weighted estimate for
the Dirichlet Laplacian in $\gamma  _r$:
\begin{prop}\label{a1}
Let $C>0$ be sufficiently large and $\epsilon >0$ sufficiently small in the definition of $\gamma _r$. Then there exists a new constant $C>0$ such that if $\phi \in
C^2(\overline{\gamma  }_r)$ and 
\ekv{cz.7.3}
{
|\phi '_x|\le \frac{1}{Cr},
}
we have 
\ekv{cz.7.6}
{
\Vert e^{\phi }Du\Vert+\frac{1}{C}\Vert \frac{1}{r}e^{\phi }u\Vert
\le C\Vert re^\phi \Delta u\Vert,\ u\in (H_0^1\cap H^2)(\gamma  _r),
}
where $\Vert w\Vert$ denotes the $L^2$ norm when the function $w$  is scalar and we write 
$$
(v|w)=\int\sum v_j(x)\overline{w}_j(x) L(dx),\ \Vert v\Vert=\sqrt{(v|v)},
$$
for ${\bf C}^n$-valued functions with components in $L^2$. $H_0^1$ and $H^2$ are the standard Sobolev spaces.
\end{prop}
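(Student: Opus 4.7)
The plan is to combine a weighted integration by parts with a Poincar\'e--Hardy inequality that exploits the thinness of $\gamma _r$. First I would pair $\Delta u$ with $e^{2\phi }u$ and apply Green's formula, using $u\in H_0^1$ to kill the boundary contribution, and expand the gradient of the product:
\[
(\Delta u,\,e^{2\phi }u)=-\Vert e^\phi \nabla u\Vert ^2-2(e^\phi \nabla u,\,e^\phi u\,\nabla \phi ).
\]
Applying Cauchy--Schwarz to each inner product, and using the hypothesis (\ref{cz.7.3}) on the second one, yields the core bound
\[
\Vert e^\phi \nabla u\Vert ^2\le \Vert re^\phi \Delta u\Vert\,\Vert e^\phi u/r\Vert+\frac{2}{C}\Vert e^\phi \nabla u\Vert\,\Vert e^\phi u/r\Vert .
\]

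The second ingredient is a weighted Poincar\'e inequality of the form $\Vert e^\phi u/r\Vert \le K\epsilon \,\Vert e^\phi \nabla u\Vert $ for $u\in H_0^1(\gamma _r)$, with a constant $K$ depending only on $C_0$. I would reduce this to the unweighted case by the substitution $v=e^\phi u\in H_0^1(\gamma _r)$: once one knows $\Vert v/r\Vert \le K\epsilon \,\Vert \nabla v\Vert$, the identity $\nabla v=e^\phi (\nabla u+(\nabla \phi )u)$ together with (\ref{cz.7.3}) produces a correction of size $(1/C)\Vert e^\phi u/r\Vert$ that is absorbed into the left-hand side for $C$ large. The unweighted Poincar\'e itself follows from the local picture (\ref{a.16}): in each rectangle $R_x$, $\gamma _r$ is a strip of transverse width $\sim \epsilon r(x)$ bounded by the graphs $\widetilde y_2=f_x^\pm $ on which $v$ vanishes, so a one-dimensional Poincar\'e on each vertical segment gives $\int |v|^2\,d\widetilde y_2\le K_0(\epsilon r(x))^2\int |\partial _{\widetilde y_2}v|^2\,d\widetilde y_2$; integrating in $\widetilde y_1$ and patching via a locally finite cover with bounded overlap produces the global estimate.

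With the abbreviations $A=\Vert e^\phi \nabla u\Vert$, $B=\Vert e^\phi u/r\Vert$, $R=\Vert re^\phi \Delta u\Vert$, the two steps read $A^2\le RB+(2/C)AB$ and $B\le K\epsilon A$. Substituting the second into the first gives $A^2\le K\epsilon RA+(2K\epsilon /C)A^2$, and for $\epsilon$ sufficiently small (and $C$ large) the quadratic term absorbs into the left, leaving $A\le C'R$ and hence $B\le C''\epsilon ^2 R$, which is exactly (\ref{cz.7.6}) after adjusting constants.

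The main obstacle I expect is making the Poincar\'e step quantitative with constants depending only on $C_0$. The rectangles $R_x$ overlap non-trivially, the transverse width $\epsilon r(x)$ varies with $x$, and although the substitution $v=e^\phi u$ cleanly eliminates the weight, a naive partition of unity on the $R_x$-cover would introduce cutoff derivatives of size $O(1/r)$ that compete directly with the quantity $B$ we are trying to control. The proper way to foliate is supplied by the smoothed distance $g_\epsilon $ of (\ref{a.9}): by (\ref{a.13}) it satisfies $\nu (g_\epsilon )\asymp 1$ (so also $|\nabla g_\epsilon |\asymp 1$) and by (\ref{a.14}) its higher derivatives are $O((\epsilon r)^{1-|\alpha |})$, so the integral curves of $\nabla g_\epsilon $ provide a smooth quasi-normal foliation of $\gamma _r$ along which the one-dimensional Poincar\'e can be applied with constants depending only on $C_0$.
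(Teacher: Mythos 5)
Your argument is correct, but it establishes the key coercivity step by a genuinely different route than the paper. The paper conjugates the Laplacian, $-\Delta_\phi=e^{\phi}(-\Delta)e^{-\phi}$, and runs a positive-commutator (multiplier) argument: from $(Du|vu)-(vu|Du)=i(\mathrm{div}\,(v)u|u)$ it derives $\int(\mathrm{div}\,(v)-|v|^2)|u|^2\le \Vert Du\Vert^2$, and then chooses $v=\nabla(e^{\lambda g_\epsilon/r})$ (the exponential of the normalized smoothed distance, cf.\ (\ref{b.1})), fixing first $\epsilon$ small and then $\lambda$ large so that $\mathrm{div}\,(v)-|v|^2\asymp r^{-2}$ as in (\ref{cz.7.2.3}), (\ref{cz.7.2.7}); the terms $(\phi'_x)^2$ and $k^2=(Cr)^{-2}$ are then absorbed exactly as in your scheme. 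You instead prove the needed lower bound as a Hardy--Poincar\'e inequality $\Vert v/r\Vert\le K\epsilon\Vert\nabla v\Vert$ for $v\in H_0^1(\gamma_r)$ by a one-dimensional Poincar\'e inequality across the transverse width $\asymp\epsilon r$, using the graph description (\ref{a.16})--(\ref{a.18}), and you treat the weight by pairing with $e^{2\phi}u$ rather than by conjugation (an equivalent bookkeeping). Both routes give (\ref{cz.7.6}); yours is more elementary and even yields the sharper bounds $\Vert e^\phi Du\Vert={\cal O}(\epsilon)\Vert re^\phi\Delta u\Vert$ and $\Vert e^\phi u/r\Vert={\cal O}(\epsilon^2)\Vert re^\phi\Delta u\Vert$, reflecting the spectral gap $\asymp(\epsilon r)^{-2}$ of the thin tube, while the paper's multiplier formulation is coordinate-free and states the estimate in the form later fed into the weighted Green-kernel bounds of Proposition \ref{a2} (which only need (\ref{cz.7.6}) with a general $\phi$ satisfying (\ref{cz.7.3}), so your version serves equally well there). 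Two small remarks: the constants in your Poincar\'e step depend on the fixed large constant $C$ in (\ref{a.15}) as well as on $C_0$, which is harmless since that constant is fixed before $\epsilon$; and your final worry about cutoff derivatives of size ${\cal O}(1/r)$ is moot, because a Poincar\'e inequality needs no partition of unity -- one simply decomposes $\gamma_r$ into slabs of horizontal extent $\asymp r$ inside the rectangles $R_{x_j}$ with bounded overlap, applies the one-dimensional inequality on a.e.\ transverse segment (where $v$, extended by $0$, vanishes at both endpoints), and sums the resulting nonnegative local quantities; the foliation by integral curves of $\nabla g_\epsilon$ via (\ref{a.13})--(\ref{a.14}) also works but is not needed.
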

\begin{proof} Let $\phi \in C^2(\overline{\gamma 
  }_r;{\bf R})$ and put 
$$
-\Delta _\phi = e^{\phi }\circ (-\Delta )\circ e^{-\phi }=D_x^2-(\phi '_x)^2+i(\phi '_x\circ D_x+D_x\circ \phi '_x),
$$
where we make the usual observation that the last term is formally
anti-self-adjoint. Then for every $u\in (H^2\cap H_0^1)(\gamma  _r)$:
\ekv{cz.7.1}
{
(-\Delta _\phi u|u)=\Vert D_xu\Vert^2-((\phi '_x)^2u|u).
}

\par We need an apriori estimate for $D_x$. Let $v:\overline{\gamma 
}_r\to {\bf R}^n$ be sufficiently smooth. We sometimes consider $v$ as
a vector field. Then for $u\in (H^2\cap H_0^1)(\gamma  _r)$:
$$
(Du|vu)-(vu|Du)=i(\mathrm{div\,}(v)u|u).
$$

Assume $\mathrm{div\,}(v)>0$. Recall that if $v=\nabla w$, then
$\mathrm{div\,}(v)=\Delta w$, so it suffices to take $w$ strictly
subharmonic. Then 
$$
\int \mathrm{div\,}(v)|u|^2dx\le 2\Vert vu\Vert\Vert Du\Vert\le \Vert
Du\Vert^2+\Vert vu\Vert^2,
$$
which we write
$$
\int (\mathrm{div\,}(v)-|v|^2)|u|^2 dx\le \Vert Du\Vert^2.
$$
Using this in (\ref{cz.7.1}), we get
\begin{eqnarray*}
&&\frac{1}{2}\Vert Du\Vert^2+\int
(\frac{1}{2}(\mathrm{div\,}(v)-|v|^2)-(\phi '_x)^2)|u|^2 dx\le\\
&& \Vert \frac{1}{k}(-\Delta _\phi )u\Vert\Vert ku\Vert\le 
\frac{1}{2}\Vert \frac{1}{k}(-\Delta _\phi )u\Vert^2+\frac{1}{2}
\Vert ku\Vert^2,
\end{eqnarray*}
where $k$ is any positive continuous function on $\overline{\gamma 
}_r$. We write this as
\ekv{cz.7.2}
{
\frac{1}{2}\Vert Du\Vert^2+\int
(\frac{1}{2}(\mathrm{div\,}(v)-|v|^2-k^2)-(\phi _x')^2)|u|^2dx \le
\frac{1}{2}\Vert \frac{1}{k}(-\Delta _\phi )u\Vert^2.
}

\par We shall see that we can choose $v$ so that 
\ekv{cz.7.2.3}{
\mathrm{div\,}(v)\ge r^{-2},\ |v|\le {\cal O}(r^{-1}).
}
After replacing $v$ by $C^{-1}v$ for a sufficiently large constant
$C$, we then achieve that 
\ekv{cz.7.2.7}{\mathrm{div\,}(v)-|v|^2\asymp r^{-2}.} 

\par Before continuing, let us establish (\ref{cz.7.2.3}): Let
$g=g_\epsilon $ be the function in the definition of $\gamma 
_r=\widehat{\gamma }_{\epsilon ,C\epsilon r}$ in (\ref{a.15}), so that $C^{-1}\le |\nabla g|\le 1$ (with
the new $C$ independent of $\epsilon $, $C$ in (\ref{a.15})),
$\partial ^\alpha g={\cal O}_\epsilon (r(x)^{1-|\alpha |})$. Put 
\ekv{b.1}
{
v=\nabla (e^{\lambda g/r}),
}
where $\lambda >0$ will be sufficiently large. Notice that 
$$
\nabla (\frac{g}{r})=\frac{\nabla g}{r}-\frac{g\nabla r}{r^2},
$$
where $$|\frac{\nabla g}{r}|\asymp \frac{1}{r}$$ uniformly with respect
to $\epsilon $ and 
$$
|\frac{g\nabla r}{r^2}|={\cal O}(1)\frac{g}{r}\frac{1}{r}={\cal
  O}(\epsilon )\frac{1}{r},
$$
in $\gamma  _r$, so if we fix $\epsilon >0$ sufficiently small,
then 
$$
|\nabla (\frac{g}{r})|\asymp \frac{1}{r}.
$$
We have 
$$
v=e^{\frac{\lambda g}{r}}\lambda \nabla (\frac{g}{r}),\ |v|\asymp
e^{\lambda {\cal O}(\epsilon )}\frac{\lambda }{r},
$$
so the second part of (\ref{cz.7.2.3}) holds for every fixed value of $\lambda $. Further,
$$
\mathrm{div\,}(v)=e^{\frac{\lambda g}{r}}(\lambda ^2|\nabla
(\frac{g}{r})|^2+\lambda \Delta (\frac{g}{r})).
$$
Here,
$$
|\nabla (\frac{g}{r})|^2\asymp \frac{1}{r^2},\ \Delta
(\frac{g}{r})={\cal O}(\frac{1}{r^2}),
$$
so if we fix $\lambda $ large enough, we also get the first part of (\ref{cz.7.2.3}).

If we
choose $k=(Cr)^{-1}$ for a sufficiently large constant $C$, we get
from (\ref{cz.7.2.7}), (\ref{cz.7.3})
$$
\frac{1}{2}(\mathrm{div\,}(v)-|v|^2-k^2)-(\phi '_x)^2\asymp r^{-2}.
$$
Thus, with a new sufficiently large constant $C$, we get from 
(\ref{cz.7.2}):
\ekv{cz.7.4}
{
\Vert Du\Vert^2+\frac{1}{C}\int_{\gamma  _r}\frac{1}{r^2}|u|^2dx\le
C\Vert r(-\Delta _\phi )u\Vert^2,
}
which we can also write as
\ekv{cz.7.5}
{
\Vert Du\Vert+\frac{1}{C}\Vert \frac{1}{r}u\Vert\le C \Vert r(-\Delta
_\phi )u\Vert .
}
Keeping in mind (\ref{cz.7.3}), we get (\ref{cz.7.6}) 
by applying (\ref{cz.7.5}) to $e^{\phi }u$.\end{proof}
 
\par If $\Omega \Subset {\bf C}$ has smooth boundary, let $G_\Omega $,
$P_\Omega $ denote the Green and the Poisson kernels of $\Omega $, so
that the Dirichlet problem,
$$
\Delta u=v,\ {{u}_\vert}_{\partial \Omega }=f,\quad u,v\in C^\infty
(\overline{\Omega }),\ f\in C^{\infty }(\partial \Omega ),
$$
has the unique solution
$$
u(x)=\int_\Omega G_\Omega (x,y)v(y)L(dy)+\int_{\partial \Omega
}P_\Omega (x,y)f(y)|dy|.
$$
Recall that $-G_\Omega \ge 0$, $P_\Omega \ge 0$. It is also clear that 
\ekv{c.1}
{
-G_{\Omega }(x,y)\le C-\frac{1}{2\pi }\ln |x-y|,
}
where $C>0$ only depends on the diameter of $\Omega $. Indeed, let
$-G_0(x,y)$ denote the right hand side of (\ref{c.1}) and choose $C>0$
large enough so that $-G_0\ge 0$ on $\Omega \times \Omega $. Then on
the operator level,
$$
G_{\Omega }v=G_0v-P_\Omega ({{G_0v}_\vert}_{\partial \Omega }),
$$
so that 
$$
G_{\Omega }(x,y)=G_0(x,y)-\int_{\partial \Omega }P_\Omega
(x,z)G_0(z,y) |dz|,
$$
and hence $G_\Omega \ge G_0$, $-G_\Omega \le -G_0$.

\par We will also use the scaling property:
\ekv{c.2}
{
G_{\Omega }(\frac{x}{t},\frac{y}{t})=G_{t\Omega }(x,y),\ x,y\in
t\Omega , t>0,
}
and the fact that $-G_{\Omega }$ is an increasing function of $\Omega $
in the natural sense.
\begin{prop}\label{a2} Under the same assumptions as in Proposition \ref{a1} there exists a (new) constant $C>0$ such that we have 
\ekv{c.3}
{
-G_{\gamma  _r}(x,y)\le C-\frac{1}{2\pi }\ln \frac{|x-y|}{r(y)},\hbox{
  when }|x-y|\le \frac{r(y)}{C},
}
\ekv{c.4}
{
-G_{\gamma  _r}(x,y)\le C \exp (-\frac{1}{C}\int_{\pi _\gamma (y)}^{\pi _\gamma (x)}\frac{1}{r(t)}|dt|),\hbox{
  when }|x-y|\ge \frac{r(y)}{C},
}
where it is understood that the integral is evaluated along $\gamma $
from $\pi _\gamma (y)\in \gamma $ to $\pi _\gamma (x)\in \gamma $,
where $\pi _\gamma (y)$, $\pi _\gamma (x)$ denote points in $\gamma $
with $|x-\pi _\gamma (x)|=\mathrm{dist\,}(x,\gamma )$, 
$|y-\pi _\gamma (y)|=\mathrm{dist\,}(y,\gamma )$, and we choose these
two points (when they are not uniquely defined) and the intermediate
segment in such a way that the integral is as small as possible.
\end{prop}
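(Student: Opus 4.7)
My plan is to deduce both estimates from the weighted $L^2$ estimate of Proposition \ref{a1}, applied to a ``remainder'' $R$ obtained by subtracting a carefully chosen model singular part from $G_{\gamma _r}(\cdot ,y)$, and then to pass to pointwise bounds by interior elliptic regularity.

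Fix $y\in \gamma _r$ and choose a cutoff $\chi \in C_0^\infty (D(y,cr(y)))$ with $\chi =1$ on $D(y,cr(y)/2)$, where $c$ is small enough that $\chi $ is supported strictly inside $\gamma _r$; for $y$ close to $\partial \gamma _r$, the local description (\ref{a.16})--(\ref{a.18}) allows one to replace the model disc by a half-disc or half-plane model tangent to $\partial \gamma _r$ near $y$. I would write
$$G_{\gamma _r}(x,y)=\chi (x)\,G_0(x,y)+R(x),$$
where $G_0(x,y)=\frac{1}{2\pi }\ln\frac{|x-y|}{r(y)}$ is the Dirichlet Green kernel of the model disc $D(y,r(y))$ (suitably modified in the half-plane case). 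Then $R\in (H^2\cap H_0^1)(\gamma _r)$, $\Delta R$ is smooth, supported in the annulus $cr(y)/2\le |x-y|\le cr(y)$, and pointwise of size ${\cal O}(r(y)^{-2})$. The crucial point is that $G_0$ already absorbs the factor $-\ln r(y)$, so that $\Vert r\,\Delta R\Vert_{L^2(\gamma _r)}={\cal O}(1)$ uniformly in $y$.

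To establish (\ref{c.4}), I would apply Proposition \ref{a1} to $u=R$ with the weight
$$\phi (x)=\frac{1}{C'}\int_{\pi _\gamma (y)}^{\pi _\gamma (x)}\frac{|dt|}{r(t)},$$
$C^2$-regularized via the mollification (\ref{a.9}) so that $|\phi _x'|\le 1/(Cr)$ in accordance with (\ref{cz.7.3}). Since $\Delta R$ is supported where $\phi ={\cal O}(1)$, the right-hand side of (\ref{cz.7.6}) is ${\cal O}(1)$, giving
$$\Vert e^\phi R/r\Vert_{L^2}+\Vert e^\phi DR\Vert_{L^2}\le C.$$
Combining this with interior elliptic regularity on balls $B(x,r(x)/2)$ and the two-dimensional Sobolev embedding $H^2\hookrightarrow L^\infty $ (tracked at scale $r(x)$), I expect to convert it into the pointwise bound $|R(x)|\le Ce^{-\phi (x)}$. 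For $|x-y|\ge r(y)/C$ one has $\chi (x)=0$, so $G_{\gamma _r}(x,y)=R(x)$, yielding (\ref{c.4}).

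The local bound (\ref{c.3}) comes from the same argument applied with $\phi =0$: one gets $|R(x)|\le C$ uniformly, and since $\chi =1$ on $\{|x-y|\le r(y)/C\}$ for $C$ large, the identity $G_{\gamma _r}(x,y)=\frac{1}{2\pi }\ln\frac{|x-y|}{r(y)}+R(x)$ on this set gives (\ref{c.3}) at once. The hard part will be the conversion from the weighted $L^2$ bound to the pointwise bound with the correct scale: this needs interior elliptic estimates uniform in $r(x)$, which should follow from the smoothness of $\partial \gamma _r$ quantified by (\ref{a.17}), the slow variation (\ref{int.5}) of $r$, and a scale-invariant rescaling to unit balls where standard elliptic theory applies. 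A secondary technical point, the $C^2$ regularization of $\phi $ with the gradient bound, is handled by the mollification procedure of Section \ref{cz}.
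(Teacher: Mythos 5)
Your overall strategy --- cut off a local model kernel near $y$, estimate the resulting source term in weighted $L^2$ via Proposition \ref{a1} (weight $0$ for (\ref{c.3}), an Agmon-type weight $\asymp\int |dt|/r$ for (\ref{c.4})), then pass to pointwise bounds using harmonicity away from the cut-off region --- is the same as the paper's. The gap is in the claim that $R=G_{\gamma _r}(\cdot ,y)-\chi G_0(\cdot ,y)\in (H^2\cap H_0^1)(\gamma _r)$. Proposition \ref{a1} applies only to functions with zero Dirichlet data, and $R$ vanishes on $\partial \gamma _r$ only if $\chi G_0$ does. But $\gamma _r=\widehat{\gamma }_{\epsilon ,C\epsilon r}$ is a tube of width $\asymp \epsilon r$, so every $y\in \gamma _r$ lies within ${\cal O}(\epsilon r(y))$ of $\partial \gamma _r$ and may be arbitrarily close to it; hence no fixed $c>0$ makes $D(y,cr(y))\subset \gamma _r$ uniformly, and the support of your cutoff meets $\partial \gamma _r$, where the disc kernel $\frac{1}{2\pi }\ln \frac{|x-y|}{r(y)}$ does not vanish. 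The half-plane modification you sketch does not repair this: $\partial \gamma _r$ is curved, and more importantly it has \emph{two} components passing at distance ${\cal O}(\epsilon r(y))$ from $y$, so a half-plane Green kernel tangent to one of them is of size ${\cal O}(1)$, not small, on the other. You would then need a further correction, harmonic in $\gamma _r$ with ${\cal O}(1)$ boundary data supported in $D(y,cr(y))$, and for (\ref{c.4}) you would have to prove that this correction decays exponentially along the tube --- a harmonic-measure type estimate with nonzero boundary data which Proposition \ref{a1} does not provide and which your proposal does not supply.

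The paper avoids this issue by taking as local model the Green kernel $G_{\Omega _y}$ of a domain $\Omega _y=y+r(y)\Omega $, uniformly bounded and uniformly smooth after rescaling, which \emph{coincides with} $\gamma _r$ in $D(y,2r(y)/C)$. Then $\chi (\frac{\cdot -y}{r(y)})G_{\Omega _y}(\cdot ,y)$ vanishes on $\partial \gamma _r$, the identity (\ref{c.5}) holds, and the remainder $-G_{\gamma _r}[\Delta ,\chi ]G_{\Omega _y}(\cdot ,y)$ has zero Dirichlet data by construction, so (\ref{cz.7.6}) applies to it directly; the near-diagonal logarithmic bound for the model is not an explicit formula but follows from (\ref{c.1}), the scaling (\ref{c.2}) and the monotonicity of $-G$ with respect to the domain. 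If you replace your $G_0$ by this $G_{\Omega _y}$, the rest of your argument --- the commutator bound $\Vert r[\Delta ,\chi ]G_{\Omega _y}(\cdot ,y)\Vert ={\cal O}(1)$, the weighted estimate, and the rescaled passage from $L^2$ to pointwise bounds (which must be carried out up to $\partial \gamma _r$ with zero Dirichlet data, not only in the interior) --- goes through and essentially reproduces the paper's proof.
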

\begin{proof} 
Let $y\in \gamma  _r$, and put $t=r(y)$. Then we can find $\Omega
\Subset {\bf C}$ uniformly bounded (with respect to $y$) whose boundary is uniformly bounded in the $C^\infty $ sense, such that $\gamma  _r$ coincides with $y+t\Omega =:\Omega _y$
in $D(y,2r(y)/C)$, $\Omega _y\subset D(y,\frac{4r(y)}{C})$ and $r\asymp r(y)$ in that disc. In view of (\ref{c.1}), (\ref{c.2}) we see that 
$-G_{\Omega
  _y}(x,y)$ satisfies the upper bound in (\ref{c.3}). Let $\chi =\chi
(\frac{x-y}{r(y)})$ be a standard cut-off equal to one on $D(y,\frac{r(y)}{C})$ with $\mathrm{supp\,}\chi (\frac{\cdot -y}{r(y)})\subset D(y,\frac{2r(y)}{C})$,
and write the identity:
\ekv{c.5}
{
G_{\gamma  _r}(\cdot ,y)=\chi (\frac{\cdot -y}{r(y)})G_{\Omega
  _y}(\cdot ,y)-G_{\gamma  _r}[\Delta ,\chi (\frac{\cdot
  -y}{r(y)})]G_{\Omega _y}(\cdot ,y).
}
Using that $-G_{\Omega _y}$ satisfies (\ref{c.3}), we see that the
$L^2$-norm of $G_{\Omega _y}(\cdot ,y)$ over the cut-off region
(i.e. the support of the $x$-gradient of the cut-off) is
${\cal O}(r(y))$ and since $G_{\Omega _y}$ is harmonic with boundary
value $0$ there, the $L^2$-norm of $\nabla _xG_{\Omega
  _y}(x,y)$ over the same region is ${\cal O}(1)$. It follows that 
$$
\Vert [\Delta ,\chi (\frac{\cdot -y}{r(y)})]G_{\Omega _y}(\cdot ,y)\Vert
={\cal O}(\frac{1}{r(y)}),
$$
and hence, by applying (\ref{cz.7.6}) with $\phi =0$ to 
$$
u=G_{\gamma  _r}[\Delta ,\chi (\frac{\cdot -y}{r(y)})]G_{\Omega
  _y}(\cdot ,y),
$$ 
we get
$$
\frac{1}{r}G_{\gamma  _r}[\Delta ,\chi (\frac{\cdot
  -y}{r(y)})]G_{\Omega _y}(\cdot ,y)={\cal O}(1),\hbox{ in }L^2(\gamma  _r).
$$
Away from $\mathrm{supp\,}[\Delta ,\chi (\frac{\cdot -y}{r(y)})]$ the
function $G_{\gamma  _r}[\Delta ,\chi (\frac{\cdot -y}{r(y)})]G_{\Omega
_y}(\cdot ,y)$ is harmonic on $\gamma  _r$ with boundary value zero and
we conclude that inside the region where $\chi (\frac{\cdot
  -y}{r(y)})=1$, it is ${\cal O}(1)$. From (\ref{c.5}) we then get the
estimate (\ref{c.3}).

\par To get (\ref{c.4}) we now apply the same reasoning to
(\ref{c.5}), now with $\phi $ as in (\ref{cz.7.3}), (\ref{cz.7.6}),
together with standard arguments for exponentially weighted estimates,
for instance as in \cite{HeSj84}.
\end{proof}

\par We will also need a lower bound on $G_{\gamma  _r}$ on suitable
subsets of $\gamma  _r$. For $\epsilon >0$ fixed and sufficiently small,
we say that $M\Subset \gamma  _r$ is an
elementary piece of $\gamma  _r$ if 
\begin{itemize}
\item $M\subset \gamma  _{(1-\frac{1}{C})r}$,
\item $\frac{1}{C}\le \frac{r(x)}{r(y)}\le C$, $x,y\in M$,
\item $\exists y\in M$ such that $M=y+r(y)\widetilde{M}$, where
  $\widetilde{M}$ belongs to a bounded set of relatively compact
  subsets of ${\bf C}$ with smooth boundary.
\end{itemize}
In the following, it will be tacitly understood that we choose our
elementary pieces with some uniform control ($C$ fixed and uniform
control on the $\widetilde{M}$).
\begin{prop}\label{a3}
If $M$ is an elementary piece in $\gamma  _r$, then 
\ekv{c.6}
{
-G_{\gamma  _r}(x,y)\asymp 1+|\ln \frac{|x-y|}{r(y)}|,\ x,y\in M.
}
\end{prop}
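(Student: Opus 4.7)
\textbf{Plan for Proposition \ref{a3}.} My plan is to handle the upper and lower bounds separately, with the upper bound being essentially a repackaging of Proposition \ref{a2} and the lower bound coming from domain monotonicity combined with scaling.

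\medskip

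\emph{Upper bound.} Since $M$ is an elementary piece, its diameter is comparable to $r(y)$ for any $y\in M$, so $|x-y|/r(y)$ is bounded above by a uniform constant for $x,y\in M$. I split into two regimes. When $|x-y|\le r(y)/C$, the upper bound
$$
-G_{\gamma_r}(x,y)\le C-\frac{1}{2\pi}\ln\frac{|x-y|}{r(y)}\lesssim 1+\bigl|\ln\tfrac{|x-y|}{r(y)}\bigr|
$$
is just (\ref{c.3}). When $|x-y|\ge r(y)/C$, (\ref{c.4}) yields $-G_{\gamma_r}(x,y)\le C$, and since $|x-y|\asymp r(y)$ in this regime the quantity $1+|\ln(|x-y|/r(y))|$ is itself bounded and bounded away from zero, so the required upper bound follows.

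\medskip

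\emph{Lower bound via a model subdomain.} The key idea is that $-G_\Omega$ is monotone increasing in $\Omega$, so it suffices to exhibit a smooth subdomain $\Omega_0\subset\gamma_r$ containing $M$ and obtained by scaling from a compact family of model domains. Concretely, because $M\subset\gamma_{(1-1/C)r}$ and $r$ varies slowly on $M$, I would fix the distinguished point $y_0\in M$ and radius $t=r(y_0)$ from the definition of elementary piece, and pick $\Omega_0=y_0+t\widetilde{\Omega}_0$, where $\widetilde{\Omega}_0$ is a fixed smooth bounded open set containing $\widetilde{M}$ compactly (one such $\widetilde{\Omega}_0$ works uniformly for each equicontinuous family of $\widetilde{M}$'s). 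The slow variation of $r$ on $M$ together with the margin given by the $(1-1/C)$ factor ensures $\Omega_0\subset\gamma_r$ provided we enlarge $\widetilde{\Omega}_0$ only a bounded amount beyond $\widetilde{M}$.

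\medskip

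\emph{Conclusion by scaling.} Once $\Omega_0$ is in place, monotonicity gives $-G_{\gamma_r}(x,y)\ge -G_{\Omega_0}(x,y)$, and the scaling identity (\ref{c.2}) yields
$$
-G_{\Omega_0}(x,y)=-G_{\widetilde{\Omega}_0}\!\Bigl(\frac{x-y_0}{t},\frac{y-y_0}{t}\Bigr).
$$
For the fixed smooth bounded domain $\widetilde{\Omega}_0$ the Green kernel satisfies the classical two-sided estimate
$-G_{\widetilde{\Omega}_0}(\widetilde{x},\widetilde{y})\asymp 1+|\ln|\widetilde{x}-\widetilde{y}||$
for $\widetilde{x},\widetilde{y}$ ranging in the compact subset $\widetilde{M}\Subset\widetilde{\Omega}_0$: the logarithmic singularity on the diagonal is standard, while away from the diagonal strict positivity on a compact set follows from the maximum principle and smoothness of $\partial\widetilde{\Omega}_0$. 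Substituting $\widetilde{x}=(x-y_0)/t$, $\widetilde{y}=(y-y_0)/t$ and using $t=r(y_0)\asymp r(y)$ gives
$$
-G_{\Omega_0}(x,y)\asymp 1+\bigl|\ln\tfrac{|x-y|}{r(y)}\bigr|,
$$
which is (\ref{c.6}).

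\medskip

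\emph{Main obstacle.} The only nontrivial point is ensuring that the constants in the final $\asymp$ are uniform across all elementary pieces. This reduces to the observation that, as $\widetilde{M}$ runs over a bounded family of relatively compact subsets of $\C$ with smooth boundary, one can choose $\widetilde{\Omega}_0$ from a single compact family of smooth bounded domains containing $\widetilde{M}$ with uniform margin, whence the Green kernel estimate on $\widetilde{\Omega}_0$ is uniform. All remaining steps (monotonicity, scaling, passage from $r(y_0)$ to $r(y)$) are routine.
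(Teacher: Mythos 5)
Your proposal is correct in outline, but your lower bound follows a genuinely different route from the paper. The paper first proves the two-sided bound near the diagonal, $-G_{\gamma_r}(x,y)\asymp -\ln \frac{|x-y|}{r(y)}$ for $\frac{|x-y|}{r(y)}\ll 1$, by re-running the localization identity (\ref{c.5}) from the proof of Proposition \ref{a2} (comparison with the model domain $\Omega_y$ that coincides with $\gamma_r$ near $y$, the error being ${\cal O}(1)$ by the weighted estimate of Proposition \ref{a1} with $\phi=0$), and then propagates the bound $-G_{\gamma_r}(\cdot,y)\asymp 1$ away from the diagonal by Harnack's inequality on a slightly enlarged elementary piece $M'\setminus D(y,\frac{r(y)}{2C})$. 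You instead obtain the lower bound in one stroke from domain monotonicity: inscribe a scaled smooth model domain $\Omega_0=y_0+t\widetilde{\Omega}_0$ with $M\Subset \Omega_0\subset\gamma_r$, use $-G_{\gamma_r}\ge -G_{\Omega_0}$, the scaling identity (\ref{c.2}), and the classical two-sided logarithmic estimate for $-G_{\widetilde{\Omega}_0}$ on a compact subset, uniformly over a compact family of model domains. Your route is more self-contained for the lower bound (it uses only the monotonicity and scaling facts recorded before Proposition \ref{a2}, and needs neither the cut-off identity nor Harnack), at the cost of constructing the inscribed domain and verifying uniformity of the model Green-kernel bounds over the family; the paper's route reuses machinery already set up. Two small cautions: the enlargement of $\widetilde{M}$ must be by a sufficiently small uniform margin (not merely ``a bounded amount''), since the tube $\gamma_r$ has width only of order $\epsilon r$, and the margin available from $M\subset\gamma_{(1-\frac{1}{C})r}$ is of that order; and the uniform strict positivity of $-G_{\widetilde{\Omega}_0}$ off the diagonal needs a Harnack-chain or compactness argument (with the implicit connectivity of the elementary pieces), an assumption the paper's own Harnack step also uses tacitly. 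With those points made explicit, your argument gives (\ref{c.6}) with constants depending only on the uniform control of the elementary pieces, as required.
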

\begin{proof}
We just outline the argument. First, by using arguments from the proof
of Proposition \ref{a2} (without any exponential weights), we see that 
\ekv{c.7}
{
-G_{\gamma  _r}(x,y)\asymp -\ln \frac{|x-y|}{r(y)},\hbox{ when }x,y\in
M,\ \frac{|x-y|}{r(y)}\ll 1.
}
Next, if $M'$ is a slightly larger elementary piece of the form
$y+(1+\frac{1}{C})r(y)\widetilde{M}$, then from Harnack's inequality
for the positive harmonic function $-G_{\gamma  _r}(\cdot ,y)$ on
$M'\setminus D(y,\frac{1}{2C}r(y))$, we see that $-G_{\gamma 
  _r}(x,y)\asymp 1$ in $M\setminus D(y,\frac{1}{C}r(y))$, which
together with (\ref{c.7}) gives (\ref{c.6}).
\end{proof}

\section{Distribution of zeros}\label{di}
\setcounter{equation}{0}

\par Let $\phi $ be a continuous subharmonic function defined in some neighborhood of 
$\overline{\gamma  _r}$. Let 
\ekv{cz.9}
{
\mu =\mu _\phi =\Delta \phi 
}
be the corresponding locally finite  positive measure.

Let $u$ be a holomorphic function defined in a neighborhood of 
$\Gamma \cup\overline{\gamma  _r}$. 
We assume that 
\ekv{cz.10}
{
h\ln \vert u(z)\vert \le \phi (z),\ z\in\overline{\gamma  _r}. 
}

\begin{lemma}\label{cz1}
Let $z_0\in M$, where $M$ is an elementary piece, such that
\ekv{cz.11}
{h\ln \vert u(z_0)\vert \ge \phi (z_0)-\epsilon ,\ 0<\epsilon \ll 1.}
Then the number of zeros of $u$ in $M$ is 
\ekv{cz.12}{\le {C\over h}(\epsilon +\int_{\gamma  _r}-G_{\gamma _r}(z_0,w)\mu (dw)).}
\end{lemma}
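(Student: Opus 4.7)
The plan is to derive a Poisson--Jensen identity in $\gamma_r$ by applying the Green representation formula to both $h\ln|u|$ and $\phi$, use (\ref{cz.10}) and (\ref{cz.11}) to convert it into an upper bound on $\sum_{z_j}(-G_{\gamma_r}(z_0,z_j))$, the sum running over the zeros of $u$ in $\gamma_r$, and then invoke Proposition~\ref{a3} to bound each term from below by a positive constant when $z_j$ lies in $M$. This division then yields (\ref{cz.12}).

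Concretely, since $\partial\gamma_r$ is smooth, for sufficiently regular $v$ one has
\[
v(z_0)=\int_{\gamma_r}G_{\gamma_r}(z_0,w)\,\Delta v(w)+\int_{\partial\gamma_r}P_{\gamma_r}(z_0,w)\,v(w)\,|dw|.
\]
Applied to $v=h\ln|u|$, for which $\Delta v=2\pi h\sum_{z_j}\delta_{z_j}$ (zeros of $u$ in $\gamma_r$ counted with multiplicity), and to $v=\phi$, for which $\Delta v=\mu$, subtraction gives
\[
h\ln|u(z_0)|-\phi(z_0)=-2\pi h\sum_{z_j}(-G_{\gamma_r}(z_0,z_j))+\int_{\gamma_r}(-G_{\gamma_r}(z_0,w))\,\mu(dw)+\int_{\partial\gamma_r}P_{\gamma_r}(z_0,w)\bigl(h\ln|u(w)|-\phi(w)\bigr)|dw|.
\]
The last integral is $\le 0$ by (\ref{cz.10}) and the positivity of $P_{\gamma_r}$, while (\ref{cz.11}) bounds the left-hand side from below by $-\epsilon$. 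Rearranging yields
\[
2\pi h\sum_{z_j}(-G_{\gamma_r}(z_0,z_j))\le\epsilon+\int_{\gamma_r}(-G_{\gamma_r}(z_0,w))\,\mu(dw).
\]
Dropping the zeros outside $M$ from the left-hand side and using Proposition~\ref{a3}, which gives $-G_{\gamma_r}(z_0,z_j)\asymp 1+|\ln(|z_0-z_j|/r(z_j))|$ and in particular $-G_{\gamma_r}(z_0,z_j)$ uniformly bounded away from $0$ for $z_0,z_j\in M$, one arrives at (\ref{cz.12}).

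The main technical point is the justification of the representation formulas for the regularity at hand: $\phi$ is only continuous, so $\mu$ is merely a Radon measure, and $h\ln|u|$ has logarithmic singularities at each $z_j$. The standard remedy is to mollify $\phi$ as in (\ref{a.9}) and pass to the limit using the pointwise bounds (\ref{c.3})--(\ref{c.4}) of Proposition~\ref{a2} to dominate the convergence of the $G_{\gamma_r}$-integrals; for $h\ln|u|$, one subtracts off the singular parts $h\sum_j\ln|\cdot-z_j|$ to obtain a function that is harmonic near each $z_j$, applies the identity to the smooth remainder, and uses $\Delta\ln|\cdot-z_j|=2\pi\delta_{z_j}$ to reinstate the delta-function contributions in the final formula.
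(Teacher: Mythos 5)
Your argument is correct and is essentially the paper's own proof: the paper applies the same Green--Poisson representation to $h\ln|u|$ (with $\Delta\,h\ln|u|=h\,n_u$), uses (\ref{cz.10}) on the Poisson term and (\ref{cz.11}) at $z_0$ to get $\int_{\gamma_r}-G_{\gamma_r}(z_0,w)\,h\,n_u(dw)\le\epsilon+\int_{\gamma_r}-G_{\gamma_r}(z_0,w)\,\mu(dw)$, and then the lower bound $-G_{\gamma_r}(z_0,w)\ge 1/C$ on $M$ from Proposition~\ref{a3}. The only cosmetic difference is the regularization: the paper writes $\phi$ as a uniform limit of an increasing sequence of smooth functions (and assumes no zeros of $u$ on $\partial\gamma_r$), whereas you mollify and subtract the logarithmic singularities, which amounts to the same thing.
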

\begin{proof}
Writing $\phi $ as a uniform limit of an increasing sequence of smooth 
functions, we may assume that $\phi \in C^\infty $.
Let 
$$
n_u(dz)=\sum 2\pi \delta (z-z_j),
$$
where $z_j$ are the zeros of $u$ counted with their multiplicity. We may 
assume that no $z_j$ are situated on $\partial \gamma  _r$. Then, since 
$\Delta \ln \vert u\vert =n_u$,
\eeeekv{cz.13}
{
h\ln \vert u(z)\vert&=&} {&& \int_{\gamma  _r} G_{\gamma _r}(z,w)h n_u (dw)+\int_{\partial 
\gamma  _r}P_{\gamma _r}(z,w)h\ln \vert u(w)\vert \vert dw\vert}   
{&\le& \int_{\gamma  _r}G_{\gamma _r}(z,w)hn_u(dw)+\int_{\partial \gamma  
_r}P_{\gamma _r}(z,w)\phi (w)\vert dw\vert} 
{&=& \int_{\gamma  _r}G_{\gamma _r}(z,w)hn_u(dw)+\phi (z)-\int_{\gamma  
_r}G_{\gamma _r}(z,w)\mu (dw).}
Putting $z=z_0$ in \no{cz.13} and using \no{cz.11}, we get
$$
\int_{\gamma  _r}-G_{\gamma _r}(z_0,w)hn_u(dw)\le \epsilon +\int_{\gamma  
_r}-G_{\gamma _r}(z_0,w)\mu (dw).
$$
Now $$
-G_{\gamma _r}(z_0,w)\ge {1 \over C},\ w\in M,
$$
and we get \no{cz.12}.
\end{proof}

\par Notice that this argument is basically the same as when using 
Jensen's 
formula to estimate the number of zeros of a holomorphic function in a disc. 

\par Let $z_j^0$, $z_j$ be as in Theorem \ref{int1}. We may arrange so that $\widetilde{\gamma }_{r/C_1}\subset \gamma _r\subset \widetilde{\gamma }_r$. In particular, the assumptions of Theorem \ref{int1} imply (\ref{cz.10}). 
Now we sharpen the assumption \no{cz.11} and assume as in Theorem \ref{int1},
\ekv{cz.14}
{
h\ln \vert u(z_j)\vert \ge \phi (z_j)-\epsilon_j .
}

Let $M_j\subset \gamma _r$ be elementary pieces such that 
\ekv{cz.16.5}
{z_j\in M_j,\ \mathrm{dist\,}(z_j,M_k)\ge \frac{r(z_j)}{C}\hbox{ when
  }
k\ne j,\
  \gamma _{\widetilde{r}}\subset \cup_j M_j,\ \widetilde{r}=(1-\frac{1}{\widetilde{C}})r,}
where $\widetilde{C}\gg 1$. Recall that $\gamma _r=\widehat{\gamma }_{\epsilon ,C\epsilon r}$ where $C,\epsilon $ are now fixed (cf (\ref{a.15}), and that $\gamma _{\alpha r}=\widehat{\gamma }_{\epsilon ,\alpha C\epsilon r}$.
We will also assume for a while that $\phi $ 
is smooth. 

\par According to Lemma \ref{cz1}, we have 
\ekv{cz.17}
{
\# (u^{-1} (0)\cap M_j)\le 
{C_3\over h}(\epsilon_j +\int_{\gamma _r}-G_{\gamma_r}(z_j,w)\mu (dw)).
}

\par Consider the harmonic functions on $\gamma _{\widetilde{r}}$,
\ekv{cz.19}
{
\Psi (z)=h(\ln \vert u(z)\vert +\int_{\gamma 
_{\widetilde{r}}}-G_{\gamma _{\widetilde{r}}}(z,w)n_u(dw)),
}
\ekv{cz.20}
{
\Phi (z)=\phi (z)+\int_{\gamma _{\widetilde{r}}}-
G_{\gamma _{\widetilde{r}}}(z,w)\mu (dw).
}
Then $\Phi (z)\ge \phi (z)$ with equality on $\partial \gamma 
_{\widetilde{r}}$. Similarly, $\Psi (z)\ge h\ln \vert u(z)\vert $ with 
equality on $\partial \gamma _{\widetilde{r}}$.

\par Consider the harmonic function
\ekv{cz.21}
{
H(z)=\Phi (z)-\Psi (z),\ z\in \gamma _{\widetilde{r}}.
}
Then on $\partial \gamma _{\widetilde{r}}$, we have by \no{cz.10} that
$$
H(z)=\phi (z)-h\ln \vert u(z)\vert \ge 0,
$$
so by the maximum principle,
\ekv{cz.22}
{
H(z)\ge 0,\hbox{ on }\gamma _{\widetilde{r}}.
}
By \no{cz.14}, we have 
\eeeekv{cz.23}
{
H(z_j)&=& \Phi (z_j)-\Psi (z_j)
}
{
&=&\phi (z_j)-h\ln \vert u(z_j)\vert }
{&&+\int_{\gamma _{\widetilde{r}}}-G_{\gamma _{\widetilde{r}}}(z_j,w)\mu 
(dw)-\int_{\gamma _{\widetilde{r}}} -G_{\gamma _{\widetilde{r}}}(z_j,w)hn_u(dw)
}
{
&\le & \epsilon_j +\int_{\gamma _{\widetilde{r}}} -
G_{\gamma _{\widetilde{r}}}(z_j,w)\mu (dw).
}

\par Harnack's inequality implies that 
\ekv{cz.24}
{
H(z)\le {\cal O}(1)(\epsilon_j +\int -G_{\gamma _{\widetilde{r}}}(z_j,w)\mu (dw))\hbox{ 
on }M_j\cap \gamma _{\widehat{r}},\ \widehat{r}=(1-\frac{1}{\widetilde{C}})\widetilde{r}.}

\par Now assume that $u$ extends to a holomorphic function in a neighborhood of 
$\Gamma \cup \overline{\gamma _r}$. We then would like to evaluate the 
number of zeros of $u$ in $\Gamma $. Using \no{cz.17}, we first have 
\ekv{cz.25}
{
\# (u^{-1}(0)\cap \gamma _{\widetilde{r}})\le {C\over h}\sum_{j=1}^N
\left(\epsilon _j
+ \int_{\gamma _r}-G_{\gamma_r}(z_j,w)\mu (dw)\right).
}
\par Let $\chi \in C_0^\infty (\Gamma \cup \gamma _{\widehat{r}};[0,1])$ be equal to 1 on $\Gamma $. Of course $\chi $ 
will have to depend on $r$ but we may assume that for all $k \in{\bf N}$,
\ekv{cz.26}
{
\nabla ^k\chi ={\cal O}(r^{-k}).
}
We are interested in 
\ekv{cz.27}
{
\int \chi (z)hn_u(dz)=\int_{\gamma _{\widehat{r}}}h\ln \vert u(z)\vert 
\Delta \chi (z)L(dz).
}
Here we have on $\gamma _{\widetilde{r}}$
\eeeekv{cz.28}
{h\ln \vert u(z)\vert &=&\Psi (z)-\int_{\gamma 
_{\widetilde{r}}}-G_{\gamma _{\widetilde{r}}}(z,w)hn_u(dw)}
{&=&\Phi (z)-H(z)-\int_{\gamma 
_{\widetilde{r}}}-G_{\gamma _{\widetilde{r}}}(z,w)hn_u(dw)}
{
&=&\phi (z)+\int_{\gamma _{\widetilde{r}}}-G_{\gamma _{\widetilde{r}}}
(z,w)\mu 
(dw)-H(z)-\int_{\gamma _{\widetilde{r}}}-G_{\gamma _{\widetilde{r}}}(z,w)hn_u(dw)
}
{&=& \phi (z)+R(z),}
where the last equality defines $R(z)$.

\par Inserting this in \no{cz.27}, we get 
\ekv{cz.29}
{
\int\chi (z)hn_u(dz)=\int \chi (z)\mu (dz)+\int R(z)\Delta \chi (z)L(dz).
}
(Here we also used some extension of $\phi $ to $\Gamma $ 
with $\mu =\Delta \phi $.) The task is now to estimate $R(z)$ and the 
corresponding integral in \no{cz.29}. Put 
\ekv{cz.30}
{
\mu _j=\mu (M_j\cap \gamma _{\widetilde{r}}).
}
Using the exponential decay property \no{c.4} (equally valid for 
$G_{\gamma _{\widetilde{r}}}$) we get for $z\in M_j\cap \gamma 
_{\widetilde{r}}$, ${\rm dist\,}(z,\partial M_j)\ge r(z_j)/{\cal
  O}(1)$:
\ekv{cz.31}
{
\int_{\gamma _{\widetilde{r}}}-G_{\gamma _{\widetilde{r}}}
(z,w)\mu (dw)\le 
\int_{M_j\cap \gamma _{\widetilde{r}}}
-G_{\gamma _{\widetilde{r}}}(z,w)\mu 
(dw)+{\cal O}(1)\sum_{k\ne j}\mu _ke^{-{1\over C_0}\vert j-k\vert },
}
where $|j-k|$ denotes the natural distance from $j$ to $k$ in ${\bf
  Z}/N{\bf Z}$ and $C_0>0$.
Similarly from \no{cz.24}, we get 
\ekv{cz.32}
{
H(z)\le {\cal O}(1)(\epsilon_j +
\int_{M_j\cap\gamma_{\widetilde{r}}}-G_{\gamma _{\widetilde{r}}}
(z_j,w)\mu (dw)+\sum_{k\ne j}e^{-{1\over 
C_0}\vert j-k\vert }\mu _k),
}
for $z\in M_j\cap \gamma _{\widetilde{r}}$. 

\par This gives the following estimate on the contribution from the first 
two terms in $R(z)$ to the last integral in \no{cz.29}:
\eeekv{cz.32.5}
{
&&\int_{\gamma _{\widetilde{r}}}\left(\int_{\gamma _{\widetilde{r}}}
-G_{\gamma _{\widetilde{r}}}(z,w)\mu (dw)-H(z)\right)\Delta \chi (z)L(dz)
}{&&={\cal O}(1)\sum_j(\epsilon _j+\int_{M_j\cap\gamma_{\widetilde{r}}}-G_{\gamma
    _{\widetilde{r}}}(z_j,w)\mu (dw))+\sum_{k\ne
    j}e^{-\frac{1}{C_0}|j-k|}\mu _k)
}{&&+{\cal O}(1)\sum_j\int_{M_j\cap\gamma_{\widetilde{r}}}\int_{M_j\cap\gamma_{\widetilde{r}}}-G_{\gamma _{\widetilde{r}}}
(z,w)\mu (dw)|\Delta \chi (z)|L(dz).
}
Here,
\ekv{cz.32.7}{
\int_{M_j\cap\gamma_{\widetilde{r}}}-G_{\gamma _{\widetilde{r}}}(z,w) |\Delta \chi (z)|L(dz)={\cal O}(1), 
}
so (\ref{cz.32.5}) leads to 
\eekv{cz.33}
{
&&\int_{\gamma _{\widetilde{r}}}\left(\int_{\gamma _{\widetilde{r}}}
-G_{\gamma _{\widetilde{r}}}(z,w)\mu (dw)-H(z)\right)
\Delta \chi (z)L(dz)}
{&&={\cal O}(1)\left(\mu (\gamma _{\widetilde{r}})+\sum_j \epsilon _j
+\sum_j \int_{M_j\cap\gamma_{\widetilde{r}}} -G_{\gamma _{\widetilde{r}}}(z_j,w)\mu (dw)\right).
}

\par The contribution from the last term in $R(z)$ (in \no{cz.28}) to the 
last integral in \no{cz.29} is 
\ekv{cz.34}
{
\int_{z\in \gamma _{\widehat{r}}}\int_{w\in \gamma 
_{\widetilde{r}}}G_{\gamma _{\widetilde{r}}}(z,w)hn_u(dw)\Delta \chi (z)L(dz).
}
Here, by using an estimate similar to (\ref{cz.31}), with $\mu (dw)$
replaced by $L(dz)$, together with (\ref{cz.32.7}), we get 
$$\int_{z\in \gamma _{\widehat{r}}}G_{\gamma _{\widetilde{r}}}(z,w)(\Delta \chi )
(z)L(dz)={\cal O}(1),
$$
so the expression \no{cz.34} is by (\ref{cz.25})
\eeekv{cz.35}
{
&&{\cal O}(h)\# (u^{-1} (0)\cap \gamma _{\widetilde{r}})
}
{
&=&{\cal O}(1)\sum_{j=1}^N (\epsilon _j+ \int_{\gamma 
_r}(-G_{\gamma_r}(z_j,w))\mu (dw))
}
{&=&
{\cal O}(1)(\mu (\gamma _r)+\sum_{j=1}^N(\epsilon _j+ \int_{M 
_j}-G_{\gamma_r}(z_j,w)\mu (dw))).
}
This is quite similar to (\ref{cz.33}). Using Proposition \ref{a2}, we have 
\begin{eqnarray*}
&&\int_{M_j\cap\gamma_{\widetilde{r}}}-G_{\gamma _{\widetilde{r}}}(z_j,w)\mu (dw)\le\\
&&{\cal O}(1)(\int_{|w-z_j|\le \frac{r(z_j)}{C}}|\ln
\frac{|z_j-w|}{r(z_j)}|\mu (dw)+\mu (M_j\cap\gamma_{\widetilde{r}}))
\end{eqnarray*}
and similarly for the last integral in (\ref{cz.35}).
Using all this in \no{cz.29}, we get
\eekv{cz.36}
{&&\hskip -5truemm
\int\chi (z)hn_u(dz)=\int \chi (z)\mu (dz)
}
{
&&+{\cal O}(1)(\mu (\gamma _r)+\sum_j(\epsilon _j+\int _{|w-z_j|\le
  r(z_j)/C}|\ln (\frac{|z_j-w|}{r(z_j)})|\mu (dw)
).
}
We replace the smoothness assumption on $\phi $ by the assumption that 
$\phi $ is continuous near $\Gamma $ and keep \no{cz.14}. Then by 
regularization, we still get \no{cz.36}.

Now we observe that 
$$
|\# (u^{-1}(0)\cap \Gamma ) -\frac{1}{2\pi h}\int \chi (z) hn_u (dz)|
\le
\# (u^{-1}(0)\cap \gamma _{\widetilde{r}}),
$$
which can be estimated by means of (\ref{cz.35}),
and combining this with (\ref{cz.36}), we get 
\eekv{d.2}
{
&&|\# (u^{-1}(0)\cap \Gamma )-\frac{1}{2\pi h}\mu (\Gamma )|\le
}
{&& \frac{{\cal O}(1)}{h}
\left( \mu (\gamma _r)+\sum_j (\epsilon _j +\int_{|w-z_j|\le
    \frac{r(z_j)}{C}} |\ln \frac{|z_j-w|}{r(z_j)}| \mu (dw))
\right) .}
This completes the proof of Theorem \ref{int1}.\hfill{$\Box$}

\medskip\par
We next discuss when the contribution from the
logarithmic integrals in (\ref{int.8}) can be eliminated or simplified. 
Let $r$, $C_1$, $z_j^0$ be as in Theorem \ref{int1}.
Using the estimates above, we get 
\begin{eqnarray*}
&&\int_{D(z_j^0,\frac{r(z_j^0)}{2C_1})}\int
_{D(z,\frac{r(z)}{4C_1})}|\ln \frac{|w-z|}{r(z)}|\mu (dw)\frac{L(dz)}{L(D(z_j^0,\frac{r(z_j^0)}{2C_1}))}\le\\
&&\int_{D(z_j^0,\frac{r(z_j^0)}{2C_1})}\int
_{D(z_j^0,\frac{r(z_j^0)}{C_1})}|\ln \frac{|w-z|}{r(z)}|\mu
(dw)\frac{L(dz)}{L(D(z_j^0,\frac{r(z_j^0)}{2C_1}))}\le\\
&&{\cal O}(1) \mu (D(z_j^0,\frac{r(z_j^0)}{C_1})),
\end{eqnarray*}
where we changed the order of integrations in the last step and also used that $r(z)\asymp r(z_j^0)$ in
$D(z_j^0,\frac{r(z_j^0)}{C_1})$. We conclude that the
mean-value of 
$$
D(z_j^0,\frac{r(z_j^0)}{2C_1})\ni z\mapsto \int_{D(z,\frac{r(z)}{4C_1})}
|\ln \frac{|w-z|}{r(z)}|\mu (dw)
$$
is ${\cal O}(1)\mu (D(z_j^0,\frac{r(z_j^0)}{C_1}))$. Thus we can find
$\widetilde{z}_j\in D(z_j^0,\frac{r(z_j^0)}{2C_1})$ such that 
$$
\sum_{j=1}^N \int_{D(\widetilde{z}_j,\frac{r(\widetilde{z}_j)}{4C_1})}
|\ln \frac{|w-\widetilde{z}_j|}{r(\widetilde{z}_j)}|\mu (dw)={\cal
  O}(1)\mu (\widetilde{\gamma }_{r}).
$$
This gives Theorem \ref{int2}.\hfill{$\Box$}

\medskip\par
For completeness, we recall (a slight extension of) the counting proposition in \cite{HaSj}. Assume for simplicity that $\Gamma $ is an $h$ independent Lipschitz domain as defined in the introduction and that $\phi $ is subharmonic function, defined in a fixed neighborhood of the boundary
Assume that $\mu =\Delta \phi $. Let $\rho _0\in ]0,2]$ and assume that for all discs $D(z,t)$ contained in a fixed neighborhood of $\gamma =\partial \Gamma $,
\ekv{cz.37}
{
W_z(t):=\mu (D(z,t))={\cal O}(t^{\rho _0}),
}
uniformly with respect to $z,t$.
\begin{remark}\label{cz1.5}\rm
It is easy to see (\cite{HaSj}) that this assumption on $\Delta \phi $ implies that $\phi $ 
is continuous near $\Gamma $.
\end{remark}
As in \cite{HaSj}, we have
\begin{lemma}\label{cz2}
Assume \no{cz.37} for some $\rho _0\in ]0,2]$. Then for $0<2t<\tilde{r}\ll 1$
and $z\in {\bf C}$ for which $D(z,\tilde{r})$ belongs to the fixed
neighborhood of $\gamma $, where $\mu $ is defined and fulfills (\ref{cz.37}),
we have 
\ekv{cz.39}
{
\int_{D(z,\tilde{r})}|\ln \frac{|z-w|}{\tilde{r}}|\mu (dw)\le {\cal O}(1)t^{\rho _0}\ln {\tilde{r}\over 
t}+{\cal O}(1)\ln({\tilde{r}\over t})\mu (D(z,\tilde{r})).
}
\end{lemma}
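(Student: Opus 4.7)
The plan is to split the integral at radius $t$ into an inner part and an outer part. On $D(z,\tilde{r})$ the integrand equals $\ln(\tilde{r}/|z-w|)\ge 0$ and is a decreasing function of $|z-w|$, so writing
$$\int_{D(z,\tilde{r})}\ln\frac{\tilde{r}}{|z-w|}\,\mu(dw)=I_1+I_2,$$
with $I_1$ over $D(z,t)$ and $I_2$ over the annulus $D(z,\tilde{r})\setminus D(z,t)$, the outer piece is immediate: on the annulus $|z-w|\ge t$ gives $\ln(\tilde{r}/|z-w|)\le\ln(\tilde{r}/t)$, hence
$$I_2\le\ln(\tilde{r}/t)\,\mu(D(z,\tilde{r})),$$
which matches the second term of the target estimate exactly.

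For the inner piece $I_1$ I would use a layer-cake representation based on the distribution function $W_z(s)=\mu(D(z,s))$. Writing $\ln(\tilde{r}/|z-w|)=\int_{|z-w|}^{\tilde{r}}ds/s$ and applying Fubini, or equivalently integrating the Stieltjes integral $\int_0^t \ln(\tilde{r}/s)\,dW_z(s)$ by parts, gives
$$I_1=\ln\frac{\tilde{r}}{t}\,W_z(t)+\int_0^t\frac{W_z(s)}{s}\,ds,$$
provided the boundary contribution at $s=0^+$ vanishes. The hypothesis $W_z(s)={\cal O}(s^{\rho_0})$ with $\rho_0>0$ simultaneously guarantees $W_z(s)\ln(\tilde{r}/s)\to 0$ as $s\to 0^+$ and supplies the bound $\int_0^t W_z(s)/s\,ds\le {\cal O}(1)\int_0^t s^{\rho_0-1}\,ds={\cal O}(t^{\rho_0})$. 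Combined with $\ln(\tilde{r}/t)\,W_z(t)\le {\cal O}(1)\,t^{\rho_0}\ln(\tilde{r}/t)$, and absorbing the pure ${\cal O}(t^{\rho_0})$ term into the logarithmic one (legitimate since $\tilde{r}/t\ge 2$), this yields $I_1\le {\cal O}(1)\,t^{\rho_0}\ln(\tilde{r}/t)$.

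Summing $I_1$ and $I_2$ produces the desired inequality. The only delicate point in the argument is the vanishing of the boundary term at $s=0$ in the integration by parts, which is precisely where the assumption $\rho_0>0$ is indispensable: without it the estimate would fail, for instance, for measures with a point mass at $z$. Everything else reduces to a routine one-dimensional computation with the distribution function $W_z$.
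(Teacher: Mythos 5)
Your argument is correct and is essentially identical to the paper's proof: the same split at radius $t$, the trivial bound $\ln(\tilde{r}/|z-w|)\le\ln(\tilde{r}/t)$ on the annulus, and the Stieltjes integration by parts $\int_0^t\ln(\tilde{r}/s)\,dW_z(s)=[\ln(\tilde{r}/s)W_z(s)]_0^t+\int_0^t W_z(s)/s\,ds$ controlled by $W_z(s)={\cal O}(s^{\rho_0})$ on the inner disc. Your explicit remark that $\rho_0>0$ kills the boundary term at $s=0^+$ (and lets the ${\cal O}(t^{\rho_0})$ term be absorbed since $\ln(\tilde{r}/t)\ge\ln 2$) is a point the paper leaves implicit, but there is no substantive difference in approach.
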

\begin{proof}
This follows from the estimates,
$$
\int_{D(z,\tilde{r})\setminus D(z,t)}|\ln \frac{|z-w|}{\tilde{r}}|\mu (dw)\le {\cal
  O}(1) (\ln \frac{\tilde{r}}{t})\mu (D(z,\tilde{r})),
$$
and
\begin{eqnarray*}\int_{D(z,t)}|\ln \frac{|z-w|}{\tilde{r}}|\mu (dw)&\le &{\cal O}(1)\int_0^t \ln {\tilde{r}\over 
s}dW_z(s)\\
&=&{\cal O}(1)([\ln ({\tilde{r}\over s})W_z(s)]_0^t+\int_0^t {1\over s}W_z(s)ds)\\
&=&{\cal O}(1) t^{\rho _0}\ln {\tilde{r}\over t}.
\end{eqnarray*}
\end{proof}

\begin{cor}\label{cz3}
Under the same assumptions, we have for every $N\in{\bf N}$:
\ekv{cz.40}
{
\int_{D(z,\tilde{r})} |\ln \frac{|z-w|}{\tilde{r}}|\mu (dw)\le {\cal O}_N(1)(\tilde{r}^N+
\ln ({1\over \tilde{r}})\mu 
(D(z,\tilde{r}))).
}
\end{cor}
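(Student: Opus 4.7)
The plan is to apply Lemma \ref{cz2} with a judicious choice of $t$ depending on $\tilde{r}$ and on the target exponent $N$, so that the first term on the right-hand side of \nr{cz.39} becomes $\mathcal{O}_N(\tilde{r}^N)$ while the second term remains of the asserted form.

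Concretely, given $N\in\mathbf{N}$, set $M=\lceil (N+1)/\rho_0\rceil$ and choose $t=\tilde{r}^{M}$. For $\tilde{r}$ small enough (which is the regime of the statement), we have $2t<\tilde{r}$, so Lemma \ref{cz2} applies and gives
$$
\int_{D(z,\tilde{r})}\bigl|\ln\tfrac{|z-w|}{\tilde{r}}\bigr|\,\mu(dw)\le \mathcal{O}(1)\,\tilde{r}^{M\rho_0}\ln\tfrac{\tilde{r}}{t}+\mathcal{O}(1)\ln\bigl(\tfrac{\tilde{r}}{t}\bigr)\mu(D(z,\tilde{r})).
$$
With this choice, $\ln(\tilde{r}/t)=(M-1)\ln(1/\tilde{r})=\mathcal{O}_N(\ln(1/\tilde{r}))$, which handles the second summand immediately, producing the $\mathcal{O}_N(1)\ln(1/\tilde{r})\mu(D(z,\tilde{r}))$ term on the right of \nr{cz.40}.

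For the first summand, the key observation is that $M\rho_0\ge N+1$ by construction, hence
$$
\tilde{r}^{M\rho_0}\ln\tfrac{\tilde{r}}{t}\le \tilde{r}^{N+1}\cdot (M-1)\ln\tfrac{1}{\tilde{r}}=\mathcal{O}_N(1)\tilde{r}^{N},
$$
where at the last step we used that $\tilde{r}\ln(1/\tilde{r})$ is bounded on $(0,1]$. Combining the two bounds yields \nr{cz.40}. The only mild subtlety is to verify the admissibility constraint $2t<\tilde{r}$; since $M\ge 2$ and $\tilde{r}\ll 1$, this holds automatically in the regime considered, and for the (excluded) large-$\tilde{r}$ regime the inequality is trivial by local finiteness of $\mu$ (and can be absorbed in the constant $\mathcal{O}_N(1)$).
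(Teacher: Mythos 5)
Your proof is correct and is essentially the paper's own argument: the paper likewise applies Lemma \ref{cz2} with $t=\tilde{r}^{M}$, $M\in{\bf N}$, and uses $\ln \tilde{r}^{-M}=M\ln \tilde{r}^{-1}$ together with the boundedness of $\tilde{r}^{\,\epsilon}\ln(1/\tilde{r})$. The only quibble is that your $M=\lceil (N+1)/\rho_0\rceil$ need not satisfy $M\ge 2$ (e.g.\ $N=1$, $\rho_0=2$ gives $M=1$, so $2t<\tilde{r}$ fails), but replacing $M$ by $M+1$ fixes this with no other change.
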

\begin{proof}
We just choose $t=\tilde{r}^M$, $0<M\in{\bf N}$ and use that $\ln \tilde{r}^{-M}=M\ln \tilde{r}^{-1}$.
\end{proof}

We now get:
\begin{theo}\label{d4}
Assume that (\ref{cz.37}) holds for all discs $D(z,t)$ contained in
some fixed neighborhood of $\gamma $. Then under the assumtions of
Theorem \ref{int1}, we have for every $N\in {\bf N}$:
\eekv{d.11}
{
&&|\# (u^{-1}(0)\cap \Gamma )-\frac{1}{2\pi h}\mu (\Gamma )|\le
}
{&&
\frac{\widetilde{C}}{h}\left( 
\sum_1^N (\epsilon _j+{\cal O}_N(r(z_j)^N)+{\cal O}_N(1)\int
_{\widetilde{\gamma}_{r}}\ln \frac{1}{r(z)}\mu (dz))
\right).
}
\end{theo}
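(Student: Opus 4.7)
The plan is to derive Theorem~\ref{d4} directly from Theorem~\ref{int1} by using the growth bound (\ref{cz.37}) to reshape each logarithmic integral that appears in (\ref{int.8}). The only new analytic ingredient beyond Theorem~\ref{int1} is Corollary~\ref{cz3}; the rest is a bounded-overlap summation over $j$.

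First I would apply Corollary~\ref{cz3} at each point $z_j$ with the choice $\widetilde r = r(z_j)/(4C_1)$. Since the $z_j$ lie close to $\gamma $ and the parameter $r$ is small, each disc $D(z_j,\widetilde r)$ sits inside the fixed neighborhood of $\gamma $ on which (\ref{cz.37}) is assumed to hold. Thus for every $N\in {\bf N}$,
$$
\int_{D(z_j,\, r(z_j)/(4C_1))}\Bigl|\ln \frac{|w-z_j|}{r(z_j)}\Bigr|\,\mu(dw)\le {\cal O}_N(1)\Bigl(r(z_j)^N + \ln \frac{1}{r(z_j)}\,\mu\bigl(D(z_j,\, r(z_j)/(4C_1))\bigr)\Bigr),
$$
after absorbing the harmless constant $\ln(4C_1)$ into the ${\cal O}_N$-constant.

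Next I would substitute this back into (\ref{int.8}) and sum over $j=1,\dots,N$. The spacing assumption $r(z_j^0)/4\le|z_{j+1}^0-z_j^0|\le r(z_j^0)/2$ together with the Lipschitz property (\ref{int.5}) of $r$ guarantees that the family $\{ D(z_j,\, r(z_j)/(4C_1))\} $ has bounded overlap and that $r(w)\asymp r(z_j)$ on each such disc. Consequently, writing $\ln(1/r(z_j))\le \ln(1/r(w))+{\cal O}(1)$ for $w\in D(z_j,r(z_j)/(4C_1))$ and using that $\ln(1/r)$ is bounded below for $r\ll 1$,
$$
\sum_{j=1}^{N}\ln\frac{1}{r(z_j)}\,\mu\bigl(D(z_j,\, r(z_j)/(4C_1))\bigr)\le {\cal O}(1)\int_{\widetilde{\gamma }_{r}}\ln \frac{1}{r(z)}\,\mu(dz).
$$
The $\mu(\widetilde{\gamma }_{r})$ term already present in (\ref{int.8}) can be absorbed into this log-integral in the same way. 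Combining everything produces (\ref{d.11}).

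I do not anticipate any fundamental obstacle: the proof is essentially bookkeeping, namely applying Corollary~\ref{cz3} pointwise and then summing via bounded overlap. The only steps requiring a small verification are that the radii $r(z_j)/(4C_1)$ are uniformly small enough for (\ref{cz.37}) to apply at each $z_j$, and that the multiplicity with which the discs $D(z_j, r(z_j)/(4C_1))$ cover $\widetilde{\gamma }_{r}$ is uniformly bounded; both are immediate geometric consequences of the spacing condition and (\ref{int.5}).
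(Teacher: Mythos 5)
Your proposal is correct and follows essentially the same route as the paper, which states Theorem \ref{d4} as an immediate consequence of Theorem \ref{int1} combined with Corollary \ref{cz3} (set up via Lemma \ref{cz2} exactly for this purpose): apply (\ref{cz.40}) with $\tilde r\asymp r(z_j)$ to each logarithmic integral in (\ref{int.8}), absorb the constant $\ln (4C_1)$ and the term $\mu (\widetilde{\gamma }_r)$ using $\ln (1/r)\gtrsim 1$, and sum over $j$. Your bounded-overlap summation even yields a slightly sharper bound (the integral $\int_{\widetilde{\gamma }_r}\ln \frac{1}{r}\,\mu (dz)$ counted once rather than once per $j$), which in particular implies (\ref{d.11}).
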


\section{Application to sums of exponential functions}\label{ap}
\setcounter{equation}{0} Consider the function \ekv{ap.1}{ u(z;h)=\sum_1^N e^{\phi _j(z)/h}, } where $N$ is finite and $\phi _j$ are holomorphic in the open set $\Omega \subset {\bf C}$ and independent of $h$ for simplicity. Put \ekv{ap.2}{ \psi _j(z)=\Re \phi _j(z), } let $\Gamma \Subset \Omega $ have $C^\infty $ boundary $\gamma $ and assume \eekv{ap.3}{ &&\forall x\in \gamma ,\ \Psi (x):=\max_j \psi _j(x)\hbox{ is attained}}{&&\hbox{for at most 2 different values of }j, } \eekv{ap.4} { &&\hbox{If }x\in \gamma ,\ \Psi (x)=\psi _j(x)=\psi _k(x),\ j\ne k,}{&&\hbox{then }\nu (x,\partial_x)(\psi _j(x)-\psi _k(x))\ne 0, } where $\nu $ denotes the normalized vector field (say positively oriented) that is tangent to $\gamma $. We shall see that Theorem \ref{int2} allows us to determine the number of zeros of $u$ in $\Gamma $ up to ${\cal O}(1)$. This result can be further strengthened by using direct arguments (see for instance \cite{HiSj3a}), but the purpose of this section is simply to illustrate the results above. We also notice that the results will be valid if $u$ is holomorphic in $\Omega $ but with the representation (\ref{ap.1}) and the $\phi _j$ defined only in a neighborhood of $\gamma $.

We shall establish the following result (without any claim of novelty, see
\cite{HiSj3a} as well as \cite{Da03, BlMa06}). For a closely related old result on entire functions, see \cite{Le80}, Chapter VI, Section 3, Theorem 9, attributed to A.~Pfluger \cite{Pf38}.
\begin{prop}\label{ap1}
We have
\ekv{ap.5}
{
|\# (u^{-1}(0)\cap \Gamma )-\frac{1}{2\pi h}\int_{\Gamma }\Delta \Psi
(z)L(dz)|= {\cal O}(1).
}
\end{prop}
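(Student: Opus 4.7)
The plan is to apply Theorem \ref{int2} with $r\equiv ch$ for a small fixed $c>0$ and with the subharmonic function
$$
\phi :=\max (h\ln |u|,\Psi ),
$$
the pointwise maximum of two subharmonic functions on $\widetilde{\gamma }_r$: indeed $\Psi =\max _j\psi _j$ is subharmonic since each $\psi _j$ is harmonic, and $h\ln |u|$ is subharmonic because $u$ is holomorphic. The upper bound $h\ln |u|\le \phi $ is automatic, with equality on the contact set $E:=\{h\ln |u|\ge \Psi \}$. By (\ref{ap.3}) and (\ref{ap.4}), $\Delta \Psi $ is a positive measure supported on finitely many smooth crossing arcs meeting $\gamma $ transversally at points $p_1,\dots ,p_K$, with bounded density $|\partial _\nu (\psi _j-\psi _k)|$ along each, so that $\Delta \Psi (\widetilde{\gamma }_r)={\cal O}(r)={\cal O}(h)$.

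For the lower bound, one locates $\widetilde{z}_j\in E\cap D(z_j^0,r(z_j^0)/(2C_1))$, where $\phi =h\ln |u|$ and hence $\epsilon _j=0$. Such points exist in every disc: in the ``generic'' case (distance $\ge c_0>0$ from every $p_\ell $), $h\ln |u|-\Psi $ is ${\cal O}(he^{-\delta _0/h})$ but changes sign, so $E$ occupies roughly half the disc; in the ``near-crossing'' case (within ${\cal O}(h)$ of some $p_\ell $, only ${\cal O}(1)$ such indices), $u$ is well approximated by $e^{\phi _{j_0}/h}+e^{\phi _{k_0}/h}$ whose zeros lie on the crossing arc spaced $\asymp h$ apart, and $h\ln |u|-\Psi $ ranges from $-\infty $ at each zero up to $h\ln 2+o(1)$ midway between consecutive zeros, so $E$ again occupies a definite fraction of the disc. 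This reconciles with the averaging argument of Theorem \ref{int2} because the ``good averaging'' set in the disc also has definite measure, and the two can be met simultaneously.

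Since $\epsilon _j=0$ uniformly, Theorem \ref{int2} yields
$$
\bigl|\#(u^{-1}(0)\cap \Gamma )-(2\pi h)^{-1}\mu (\Gamma )\bigr|\le (C_2/h)\,\mu (\widetilde{\gamma }_r),
$$
with $\mu :=\Delta \phi $. The measure $\mu $ decomposes as $\Delta \Psi \cdot \mathbf{1}_{E^c}+\Delta (h\ln |u|)\cdot \mathbf{1}_E$ plus a positive gradient-jump surface distribution $\nu $ on $\partial E$ with density $|\partial _\nu (h\ln |u|-\Psi )|$; the zeros of $u$ contribute nothing, since they lie strictly in $E^c$. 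In the generic region $h\ln |u|-\Psi $ and its gradient are ${\cal O}(he^{-\delta _0/h})$, so the corresponding part of $\nu $ is negligible; the substantial contribution comes from the ${\cal O}(1)$-neighbourhoods of the $p_\ell $'s, where $\partial E$ sits in a tube of thickness $\asymp h$ around the crossing arcs with uniformly bounded jump density (one checks that $\partial E$ is at distance $\asymp h$ from each zero, keeping $|\nabla h\ln |u||={\cal O}(1)$ there). Combining, $\mu (\widetilde{\gamma }_r)={\cal O}(h)$ and $\mu (\Gamma )-\int _\Gamma \Delta \Psi dL={\cal O}(h)$, yielding (\ref{ap.5}).

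The principal technical obstacle is the quantitative bookkeeping of the gradient-jump distribution $\nu $ and of the discrepancy $\mu (\Gamma )-\int _\Gamma \Delta \Psi dL$. Everything hinges on the two-regime analysis of $h\ln |u|-\Psi $: super-polynomially small in the generic region, and $\asymp h$ on smooth crossing-arc tubes of thickness $\asymp h$ near each $p_\ell $, with the ${\cal O}(1)$ zeros of $u$ in $\widetilde{\gamma }_r$ providing the only substantive corner singularities of $\phi $, each contributing ${\cal O}(h)$ to the total measure.
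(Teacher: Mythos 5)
Your strategy (apply Theorem \ref{int2} with the majorant $\phi=\max(h\ln|u|,\Psi)$ and $r\asymp h$) shifts all the difficulty into the measure $\mu=\Delta\max(h\ln|u|,\Psi)$, and the bookkeeping you sketch for it has genuine gaps. First, the contact boundary $\partial E$ is \emph{not} confined to tubes of thickness $\asymp h$ around the crossing arcs: in the ``generic'' region one has $h\ln|u|-\Psi=h\ln|1+v|$ with $v$ holomorphic and exponentially small, and this difference oscillates in sign on scale $h$ (essentially along level curves of $\Im(\phi_k-\phi_{j_0})/h$), so $\partial E$ threads through the whole band $\widetilde{\gamma }_r$. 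To bound the jump part $\nu$ there it is not enough that the jump density is ${\cal O}(he^{-\delta_0/h})$; you must also control the \emph{length} of $\partial E$, which a priori can be as large as $h^{-1}$ times the area, and no argument for that is given. Nor is the asserted decomposition of $\Delta\max$ (interior parts plus a surface jump with density $|\partial_\nu(h\ln|u|-\Psi)|$) automatic: it presupposes regularity of the free boundary $\partial E$ that has to be proved. Second, in the generic case the claim that ``$E$ occupies roughly half the disc'' can fail: on a disc of radius a small multiple of $h$ the argument of $v$ varies only by ${\cal O}(1)$, so $|1+v|<1$ on the whole disc is possible and $E\cap D(z_j^0,r(z_j^0)/(2C_1))$ may be empty; you would then have to fall back on exponentially small $\epsilon_j$ rather than $\epsilon_j=0$. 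Third, Theorem \ref{int2} does not let you \emph{choose} $\widetilde z_j$; the points are produced by the averaging argument, so imposing $\widetilde z_j\in E$ requires reopening that proof (a Chebyshev refinement of the averaging), which you only assert. Finally, the comparison $\mu(\Gamma)-\int_\Gamma\Delta\Psi\,L(dz)={\cal O}(h)$ needs a specified extension of $\phi$ into $\Gamma$ and a Green-formula argument for a function that is only Lipschitz across $\partial E$.

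For contrast, the paper's proof avoids the free boundary entirely by taking the smooth explicit majorant $\Phi=h\ln(\sum_1^Ne^{\psi_j/h})$: then $h\ln|u|\le\Phi$ pointwise (\ref{ap.18}), while $h\ln|u|\ge\Phi-{\cal O}(h)e^{-d(z)/(Ch)}$ at \emph{every} point at distance $d(z)\gg h$ from the crossing curves (\ref{ap.19}), so whichever points Theorem \ref{int2} selects are admissible with $\sum\epsilon_j={\cal O}(h)$, and $\Delta\Phi$ is computed explicitly ((\ref{ap.11}), (\ref{ap.12}), (\ref{ap.16})), giving $\mu(\widetilde{\gamma }_r)={\cal O}(h)$ by a pointwise estimate; Lemma \ref{ap2} then transfers $\int_\Gamma\Delta\Phi$ to $\int_\Gamma\Delta\Psi$ up to ${\cal O}(h)$. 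If you wish to keep your route, you would need quantitative control of the contact set (length and regularity of $\partial E$, and the simultaneous choice of points), which is substantially more work than the paper's direct calculation with $\Phi$.
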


Here, in the case when $\psi _j$ and $\Psi $ are defined only in a
neighborhood of $\gamma $, we take any distribution extension of $\Psi
$ to a neighborhood of $\Gamma $. Notice that near $\gamma $ the
function $\Psi $ is subharmonic 
and $\Delta \Psi $ is supported by the union
of the curves $\gamma _{j,k}$. On each such curve, $\Delta
\Psi =\frac{\partial }{\partial n}(\psi _j-\psi _k)|dz|$, where 
$n$ is the unit normal to $\gamma _{j,k}$, oriented so that 
$\frac{\partial }{\partial n}(\psi _j-\psi _k)>0$.

\par
We shall prove Proposition \ref{ap1} by means of Theorem
\ref{int2}.

\par Put
\ekv{ap.6}
{
\Phi (z)=h\ln (\sum_1^N e^{\psi _j(z)/h}),\ z\in
\mathrm{neigh\,}(\gamma ),
}
so that $\Phi (z)=hf(\frac{\psi _1}{h},..,\frac{\psi _N}{h})$, where 
\ekv{ap.7}
{
f(x)=\ln (\sum_1^N e^{x_j}).
}
If we define $\theta _j=e^{x_j}/\sum e^{x_k}$, then $\theta
_j>0$, $\theta _1+..+\theta _N=1$, and
\ekv{ap.8}
{
\partial _{x_j} f(x)=\theta _j,
}
\ekv{ap.9}
{
f''(x)=\mathrm{diag\,}(\theta _j)-(\theta _j\theta _k)_{j,k}.
}
For $y\in {\bf R}^N$, we have 
$$
\langle f''(x)y,y\rangle =\sum \theta _jy_j^2-(\sum \theta _jy_j)^2,
$$
which is $\ge 0$, since the function $t\mapsto t^2$ is convex. Hence
$f$ is convex.

\par We apply this to $\Phi (z)$, now with $\theta _j=e^{\psi
  _j(z)/h}/\sum_k e^{\psi _k/h}$, and get
\ekv{ap.10}
{
\partial _z\Phi (z)=\sum \theta _j\partial _{z}\psi _j,\quad \partial
_z=\frac{1}{2}(\partial _{\Re z}+\frac{1}{i}\partial _{\Im z})
}
\ekv{ap.11}
{
\partial _{\overline{z}}\partial _z\Phi (z)=\frac{1}{h}\langle
f''\partial _z\psi ,\partial _{\overline{z}}\psi \rangle= \frac{1}{h}
(\sum\theta _j|\partial _z\psi _j|^2-|\sum \theta _j\partial _z\psi _j|^2).
}
In the last calculation, we also used that $\psi _j$ are
harmonic. It follows that $\Phi $ is subharmonic near $\gamma
$. Also notice that 
\ekv{ap.12}
{
\Delta \Phi (z)={\cal O}(h^{-1}),
}
and that this estimate can be considerably improved away from the
union of the $\gamma _{j,k}$: Assume for instance that $\Psi (z)=\psi
_1\ge \max_{j\ne 1}\psi _j+\delta $, where $\delta >0$ and notice that
we can take $\delta =C^{-1}d(x)$ with $d(x):=\mathrm{dist\,}(z,\cup
\gamma _{j,k})$, by (\ref{ap.3}), (\ref{ap.4}). Then 
\ekv{ap.13}
{
\Phi =h\ln (e^{\psi _1/h}(1+{\cal O}(e^{-\delta /h})))=\psi _1+{\cal
  O}(he^{-\delta /h}).
}
Further, 
\ekv{ap.14}
{
\theta _1=1+{\cal O}(e^{-\delta /h}),\ \theta _j={\cal O}(e^{-\delta
  /h}) \hbox{ for }j\ne 1,
}
so 
\ekv{ap.15}
{
\partial _z\Phi =\partial _z\psi _1+{\cal O}(e^{-\delta /h}),
}
$$
f''(e^{\psi _1/h},..,e^{\psi _N/h})={\cal O}(e^{-\delta /h}),
$$
\ekv{ap.16}
{
\partial _{\overline{z}}\partial _z\Phi ={\cal
  O}(\frac{1}{h}e^{-\delta /h}).
}

\par We will always be able to express the final result in terms of
the simpler function $\Psi $:
\begin{lemma}\label{ap2}
We have 
\ekv{ap.17}
{
\int_\Gamma \Delta \Phi L(dz)-\int_\Gamma \Delta \Psi L(dz)={\cal O}(h).
}
\end{lemma}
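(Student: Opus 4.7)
The plan is to apply Green's formula to reduce the claim to a boundary integral on $\gamma$, then bound that integral using the exponential localization of the weights $\theta_l$ near the curves $\gamma_{j,k}$. Since $\Phi$ is $C^\infty$ on a neighborhood of $\overline{\Gamma}$ (in the case where the $\phi_j$ are defined on $\Omega\supset\overline{\Gamma}$) and $\Psi$ is Lipschitz with $\Delta\Psi$ a locally finite positive measure supported on $\cup_{j,k}\gamma_{j,k}$, the Lipschitz version of Green's formula (obtained by pairing $\Delta(\Phi-\Psi)$ with a mollified $1_\Gamma$ and integrating by parts once) gives
\[
\int_\Gamma \Delta\Phi\,L(dz)-\int_\Gamma \Delta\Psi\,L(dz)=\int_\gamma \partial_\nu(\Phi-\Psi)\,|dz|,
\]
with $\nu$ the outward unit normal, and the right-hand side depends only on values on $\gamma$. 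When the $\phi_j$ are defined only in a neighborhood of $\gamma$, one first inserts a cutoff $\chi$ supported in the common domain of definition and equal to $1$ on $\gamma$: the ambiguities in extending $\Phi,\Psi$ inside $\Gamma$ then cancel in the difference, and the same boundary identity is recovered.

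Let $j^\star(x)$ denote the index realizing $\Psi(x)=\psi_{j^\star}(x)$, unique outside the finite set $\gamma\cap(\cup_{j\ne k}\gamma_{j,k})=\{x_1,\dots,x_m\}$. The finiteness of this intersection follows from the transversality hypothesis (\ref{ap.4}) combined with (\ref{ap.3}) and the compactness of $\gamma$. Outside fixed small neighborhoods $U_i$ of the $x_i$ one has a uniform gap $\Psi-\psi_l\ge\delta>0$ for $l\ne j^\star$, hence $\theta_l={\cal O}(e^{-\delta/h})$, and
\[
\partial_\nu(\Phi-\Psi)=\sum_{l\ne j^\star}\theta_l\,(\partial_\nu\psi_l-\partial_\nu\psi_{j^\star})={\cal O}(e^{-\delta/h}).
\]
Thus the contribution of $\gamma\setminus\bigcup U_i$ to the boundary integral is exponentially small.

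Near a crossing point $x_i$ with, say, $\psi_j(x_i)=\psi_k(x_i)=\Psi(x_i)$, parametrize $\gamma$ by arclength $s$ with $s=0$ at $x_i$. By (\ref{ap.4}) the tangential derivative $c:=\partial_s(\psi_j-\psi_k)(x_i)$ is nonzero, so $\psi_j-\psi_k=cs+{\cal O}(s^2)$ and $\theta_k(s)=(1+e^{cs/h})^{-1}$ up to $e^{-\delta/h}$-small corrections from the other indices (and similarly for $\theta_j$ on the opposite side). Since $|\partial_\nu\psi_l-\partial_\nu\psi_{j^\star}|$ is uniformly bounded by some constant $C$, the contribution of $U_i$ is dominated by
\[
C\int_{-\infty}^{\infty}\frac{ds}{1+e^{|c|s/h}}=\frac{Ch\ln 2}{|c|}={\cal O}(h),
\]
and summing the finitely many crossing contributions proves the lemma. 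The main technical point that needs care is the Lipschitz-Green formula for $\Psi$ together with the handling of extensions in the locally defined case; one can also note that the leading $\mathcal{O}(h)$ term from each crossing actually cancels by the symmetry $\theta_k(s)\leftrightarrow\theta_j(-s)$, so the true estimate is $\mathcal{O}(h^2)$, but $\mathcal{O}(h)$ is all that is stated.
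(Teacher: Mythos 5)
Your proof is correct and follows essentially the same route as the paper: Green's formula reduces the difference to the boundary integral of $\partial _n(\Phi -\Psi )$, which is exponentially small away from the finitely many crossing points of the $\gamma _{j,k}$ with $\gamma $ and contributes ${\cal O}(h)$ near them (the paper gets this from (\ref{ap.15}) plus the ${\cal O}(1)$ bound on the gradients over an arc of length ${\cal O}(h)$, while you compute the logistic-type integral explicitly). The concluding remark about an ${\cal O}(h^2)$ improvement by symmetry is a plausible but unneeded aside and does not affect the argument.
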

\begin{proof}
Using Green's formula, the left hand side of (\ref{ap.17}) can be
written 
$$
\int_{\gamma }(\frac{\partial \Phi }{\partial n}-
\frac{\partial \Psi }{\partial n}) |dz|,
$$
where $n$ is the suitably oriented normal direction. It then suffices
to apply (\ref{ap.15}), with $\psi _1$ replaced by $\Psi $, in the region where $d(z)\gg h$ and use that
the gradients of $\Phi $, $\Psi $ are ${\cal O}(1)$.
\end{proof}

We next notice that 
\ekv{ap.18}
{
h\ln |u(z;h)|\le \Phi (z)
}
in neighborhood of $\gamma $. On the other hand, for $z$ near $\gamma $,
$d(z)\gg h$, we have 
\ekv{ap.19}
{
h\ln |u(z;h)|\ge \Phi (z)-{\cal O}(h)e^{-d(z)/(Ch)}.
}
We can now apply Theorem \ref{int2} with $r=\mathrm{Const.\,}h$,
$d(z_j^0)\ge Ch$, $\epsilon _j={\cal O}(he^{-d(z_j^0)/(Ch)})$, $\phi =\Phi
$. In view of (\ref{ap.12}), (\ref{ap.16}), we see that $\mu
(\widetilde{\gamma }_r)={\cal O}(h)$, $\sum \epsilon _j={\cal O}(h)$,
so 
$$
\# (u^{-1}(0)\cap \Gamma )-\frac{1}{2\pi h}\int_{\Gamma }\Delta \Phi
L(dz)={\cal O}(1),
$$
and we obtain Proposition \ref{ap1} from Lemma
\ref{ap2}. 
\hfill{$\Box$}

\medskip If we would like to work directly with $\phi =\Psi $, we
still have (\ref{ap.19}) with $\Phi $ replaced by $\Psi $, while the
upper bound (\ref{ap.18}) has to be replaced by 
$$
h\ln |u(z)|\le \Psi (z)+Ch,
$$
so we have to take $\phi =\Psi +Ch$ and at most places $\epsilon _j\asymp h$. The effect of that
deterioration can be limited by chosing the $z_j$ more sparcely away
from the union of the $\gamma _{j,k}$, but we can hardly avoid a
remainder ${\cal O}(\ln \frac{1}{h})$ in (\ref{ap.5}). Similarly, by
working with $\phi =\Phi $ and the weaker Theorem \ref{d4}
(essentially from \cite{HaSj}) we also seem to get additional
logarithmic losses.

\section{A simple application to entire functions}\label{ent}
\setcounter{equation}{0}

In this section we give as a simple application a result on the number of zeros in truncated sectors for entire functions with a certain exponential growth. We have been inspired by Theorem 3 in Section 3 of Chapter 3 in \cite{Le80}, there attributed to Pfluger \cite{Pf38}. We give a variant which is not quite identical. 

\par Let $\theta <\vartheta<\theta +2\pi  $ and let $u$ be an entire function or more generally a holomorphic function defined in the sector
\ekv{ent.1}
{
{\bf R}_+e^{i]\theta -\epsilon_0 ,\vartheta+\epsilon _0[},
}
for some fixed $\epsilon _0>0$. Let $\phi $ be a continuous subharmonic function in the same sector, positively homogeneous of degree $\rho >0$, so that $\phi =r^\rho g(\omega )$ in polar coordinates $z=re^{i\omega }$.
Then 
\ekv{ent.2}
{
\Delta \phi =r^{\rho -2}(\rho ^2g(\omega )+g''(\omega )),
}
so the subharmonicity means that 
\ekv{ent.3}
{\nu :=
g''(\omega )+\rho ^2g(\omega )\ge 0 \hbox{ on }]\theta -\epsilon _0,\vartheta +\epsilon _0 [,
}
in the sense of distributions. 

\par Assume that in the sector (\ref{ent.1}), we have
\ekv{ent.4}
{
\ln |u(z)|\le \phi (z)+o(|z|^\rho ),\ |z|\to \infty .
}
Also assume that for all $0<\widetilde{\epsilon} ,\epsilon \ll 1$ there exists $R(\widetilde{\epsilon} ,\epsilon )>0$, such that for every $z$ in the sector (\ref{ent.1}) with $|z|\ge R(\epsilon ,\widetilde{\epsilon} )$, there exists $\widetilde{z}=\widetilde{z}(\widetilde{\epsilon} ,\epsilon )$ in the same sector such that 
\ekv{ent.5}
{
|\widetilde{z}-z|\le \epsilon |z|,\ \ln |u(\widetilde{z})|\ge \phi (\widetilde{z})-\widetilde{\epsilon }|\widetilde{z}|^\rho .
}
In the proof below we shall take $\widetilde{\epsilon }=\epsilon ^2$ where the exponent 2 could be replaced by any exponent $>1$. Write $R(\epsilon )$ instead of $R(\epsilon ,\epsilon ^2)$.

\begin{prop}\label{ent1}
We make the assumptions above. Also assume that the positive measure $\nu $ in (\ref{ent.3}) does not charge $\theta ,\vartheta $ in the sense that 
$$
\nu ([\theta -\epsilon ,\theta +\epsilon ]), \nu ([\vartheta -\epsilon ,\vartheta +\epsilon ]) \to 0,\ \epsilon \searrow 0.
$$
Then the number $n(R;\theta, \vartheta)$ of zeros of $u$ in $]1,R[e^{i[\theta ,\vartheta ]}=:\Gamma (R)$ satisfies
\ekv{ent.6}
{
n(R;\theta ,\vartheta )=\frac{1}{2\pi }(\int_{\Gamma (R)}\Delta (\phi )L(dz)+o(R^\rho )),\ R\to \infty . 
}\end{prop}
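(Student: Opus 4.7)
\textbf{Sketch of proof of Proposition \ref{ent1}.}
The plan is to reduce to Theorem \ref{int1} applied at $h=1$ on a truncated annular sector
\[
\Gamma_R' = \{ z : R_1 < |z| < R,\ \theta < \arg z < \vartheta\},
\]
where $R_1$ will be taken large but fixed before letting $R \to \infty$. The boundary $\gamma' := \partial \Gamma_R'$ consists of two circular arcs and two radial segments meeting at four right-angled corners, so it is Lipschitz with constants independent of $R$ and $R_1$; equip it with the Lipschitz weight $r(z) := 2 C_1 \epsilon |z|$, whose modulus $2C_1 \epsilon$ is $\le 1/2$ for $\epsilon$ sufficiently small, and then the rectangles $R_x$ of (\ref{int.2}) fit inside smooth portions of $\gamma'$ or around a single corner. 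To absorb the $o(|z|^\rho)$ in (\ref{ent.4}), write $\ln |u(z)| - \phi(z) \le a(|z|)|z|^\rho$ with $a(r) \to 0$ as $r\to \infty$, set $a_{R_1} := \sup_{r \ge R_1/2} a(r) \to 0$, and replace $\phi$ by the continuous subharmonic function $\hat\phi(z) := \phi(z) + a_{R_1} |z|^\rho$. Then $\hat\phi$ dominates $\ln|u|$ throughout $\widetilde\gamma_r \subset \{|z| \ge R_1/2\}$, and $\hat\mu := \Delta\hat\phi = \mu + a_{R_1}\rho^2|z|^{\rho-2}L(dz)$ satisfies $\hat\mu(\Gamma_R') - \mu(\Gamma_R') = O(a_{R_1}R^\rho)$.

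Place reference points $z_j^0$ on $\gamma'$ with spacing $\sim \epsilon|z_j^0|$ and use hypothesis (\ref{ent.5}) with $\widetilde\epsilon = \epsilon^2$ to select $z_j \in D(z_j^0, \epsilon|z_j^0|) = D(z_j^0, r(z_j^0)/(2C_1))$ satisfying $\ln|u(z_j)| \ge \phi(z_j) - \epsilon^2|z_j|^\rho \ge \hat\phi(z_j) - \epsilon_j$ with $\epsilon_j := (a_{R_1} + \epsilon^2)|z_j|^\rho$. Theorem \ref{int1} then bounds the number $n'$ of zeros of $u$ in $\Gamma_R'$ by
\[
\Bigl|n' - \frac{1}{2\pi}\int_{\Gamma_R'}\Delta\hat\phi\,L(dz)\Bigr| \le C_2\Bigl(\hat\mu(\widetilde\gamma_r) + \sum_j\epsilon_j + \sum_j\int_{D(z_j,r(z_j)/(4C_1))}\Bigl|\ln\frac{|w-z_j|}{r(z_j)}\Bigr|\hat\mu(dw)\Bigr).
\]

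The main work is to show that each of the three error terms is $o(R^\rho)$. From (\ref{ent.2}) one has $\mu = r^{\rho-1}dr\otimes\nu(d\omega)$; writing $\psi(\delta) := \nu([\theta-\delta,\theta+\delta]) + \nu([\vartheta-\delta,\vartheta+\delta])$ (so $\psi(\delta) \to 0$ by assumption), one checks that $\hat\mu(\widetilde\gamma_r) = O(R^\rho(\epsilon + \psi(C\epsilon) + a_{R_1}))$: the outer arc gives $O(\epsilon R^\rho)$, the radial sides give $O(R^\rho\psi(C\epsilon))$ via the product form of $\mu$ together with $\psi(\delta) \to 0$, and the inner arc gives $O(\epsilon R_1^\rho) = o(R^\rho)$. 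The sum $\sum_j\epsilon_j = (a_{R_1}+\epsilon^2)\sum_j|z_j|^\rho = O((a_{R_1}/\epsilon + \epsilon)R^\rho)$, since the spacings force $\sum_j|z_j|^\rho = O(R^\rho/\epsilon)$. For the logarithmic integrals, the elementary bound $\int_{|s|\le\tau}|\ln(|s|/\tau)|\,ds = O(\tau)$ combined with the product structure of $\hat\mu$ dominates each individual integral by $O(\epsilon|z_j|^\rho\nu(\arg z_j \pm C\epsilon)) + O(a_{R_1}\epsilon^2|z_j|^\rho)$, and summing over $j$ (using that each angle lies in only $O(1)$ of the intervals $\arg z_j \pm C\epsilon$) again gives $O(R^\rho(\epsilon + \psi(C\epsilon) + a_{R_1}))$.

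Finally, for fixed $R_1$ the number of zeros of $u$ in the bounded region $\{1 < |z| < R_1,\ \theta < \arg z < \vartheta\}$ is a finite constant, and $\int_{\{1 < |z| < R_1\}}\Delta\phi\,L = O(R_1^\rho)$, so both contributions are $O(R_1^\rho) = o(R^\rho)$ as $R \to \infty$. Assembling,
\[
\Bigl|n(R;\theta,\vartheta) - \frac{1}{2\pi}\int_{\Gamma(R)}\Delta\phi\,L(dz)\Bigr| \le CR^\rho\bigl(\epsilon + \psi(C\epsilon) + a_{R_1}/\epsilon\bigr) + O(R_1^\rho),
\]
and a diagonal argument—first choose $\epsilon$ small, then $R_1$ large (so that $a_{R_1}/\epsilon$ is small), then let $R \to \infty$—gives (\ref{ent.6}). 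The delicate point, and the only step really using the hypothesis that $\nu$ does not charge $\theta$ or $\vartheta$, is the radial-side estimate: without $\psi(\delta) \to 0$, a boundary layer of order $R^\rho$ would survive in both $\hat\mu(\widetilde\gamma_r)$ and in the logarithmic integrals, defeating the asymptotic.
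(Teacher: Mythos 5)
Your proposal is correct and follows essentially the same route as the paper: apply Theorem \ref{int1} with $h=1$ and $r(z)\asymp \epsilon |z|$ on a truncated sector, pick the points via (\ref{ent.5}) with $\epsilon_j\asymp \epsilon^2|z_j|^\rho$, get $\mu(\widetilde{\gamma}_r)=o(1)R^\rho$ from the non-charging of $\theta,\vartheta$, sum the $\epsilon_j$ using geometric spacing along the radial sides, and dominate the logarithmic integrals by the $\mu$-measure of polar rectangles; your explicit replacement of $\phi$ by $\hat{\phi}=\phi+a_{R_1}|z|^\rho$ to absorb the $o(|z|^\rho)$ in (\ref{ent.4}) makes precise a step the paper leaves implicit. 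One small slip: the parenthetical claim that each angle lies in only ${\cal O}(1)$ of the intervals $\arg z_j\pm C\epsilon$ fails for the points on the radial sides (those intervals all cluster at $\theta$ or $\vartheta$), but your stated bound survives because there the sum is $\le \psi(C\epsilon)\cdot\epsilon\sum_j|z_j|^\rho={\cal O}(\psi(C\epsilon)R^\rho)$ --- equivalently, the paper invokes bounded overlap of the full polar rectangles in $(|z|,\arg z)$, not merely of their angular projections.
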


\begin{proof}
We apply Theorem \ref{int1} with $h=1$, $r(z)=\epsilon |z|$, and $\Gamma =\widetilde{\Gamma }(R)$ equal to the truncated sector $[R(\frac{\epsilon }{2C_1}),R]e^{i[\theta ,\vartheta ]}$ for $R>R(\frac{\epsilon }{2C_1})$. (Here we notice that our domains and $r$ satisfy the assumptions of the theorem uniformly and $C_1$ is the corresponding uniform constant there.) We choose $z_j^0$ as in Theorem \ref{int1} and let $z_j=\widetilde{z}(s_j^0)$ be as in (\ref{ent.5}) with $\epsilon $ replaced by $\epsilon /(2C_1)$, so that
\ekv{ent.7}
{
|z_j-\widetilde{z}_j|\le \frac{r(z_j)}{2C_1},\ \ln |u(z_j)|\ge \phi (z_j)-\frac{\epsilon ^2}{4C_1^2}|z_j|^\rho .
}
Thus we can take $\epsilon _j$ in Theorem \ref{int1} equal to $\frac{\epsilon ^2}{4C_1^2}|z_j|^\rho$.

We have $\widetilde{\gamma }_r=\cup_{z\in \partial \widetilde{\Gamma } (R)}D(z,\epsilon |z|)$ and since $\mu =\Delta \phi (z)L(dz)$, some straight forward estimates show that
\eeekv {ent.8}
{\mu (\widetilde{\gamma }_r)&\le&C_\rho (\nu ([\theta -2\epsilon ,\theta +2\epsilon ])+}{&&\nu ([\vartheta -2\epsilon ,\vartheta + 2\epsilon ])+\epsilon \nu (\vartheta -2\epsilon ,\theta +2\epsilon ]))R^\rho}{&=& o(1)R^\rho ,\quad \epsilon \to 0.}
Here we also used the assumption that $\nu $ does not charge $\theta $ and $\vartheta $.

\par Next we estimate $\sum \epsilon_j $. The number of points $z_j^0$ on the arc $Re^{i[\theta ,\vartheta ]}$ is ${\cal O}(1/\epsilon )$, so the sum of the corresponding $\epsilon _j$ is ${\cal O}(\epsilon )R^\rho $. The points $z_j^0$ on $[\widetilde{R}(\epsilon ),R]e^{i\theta }$ can be chosen as a geometric progression so that $|z_j^0|=\widetilde{R}(\epsilon )\exp (\epsilon j/C)$ for a suitable constant $C>0$. Here $j\ge 0$ is bounded from above by the requirement $\epsilon j/C\le \ln (R/\widetilde{R}(\epsilon ))$, so the corresponding sum of $\epsilon _j$ is bounded by ${\cal O}(1)\epsilon R^\rho $. The two other parts of the boundary can be treated similarly and we conclude that
\ekv{ent.9}
{
\sum \epsilon _j={\cal O}(1)\epsilon R^\rho .
}

We finally have to treat the logarithmic integrals in (\ref{int.8}). With $r_j=|z_j|$, $\theta _j=\mathrm{arg\,}z_j$, we have $r(z_j)=\epsilon r_j$ and the integral can be bounded by 
\begin{eqnarray*}
\int_{|\omega -\theta _j|\le \frac{\epsilon }{4C_1}}\int_{|r-r_j|\le \frac{\epsilon r_j}{4C_1}}|\ln \frac{|re^{i\omega }-r_je^{i\theta _j}|}{\epsilon r_j}|r^{\rho -1}dr \nu (d\omega ) \le \\
\int_{|\omega -\theta _j|\le \frac{\epsilon }{4C_1}}\int_{|r-r_j|\le \frac{\epsilon r_j}{4C_1}}|\ln \frac{|r-r_j|}{\epsilon r_j}|r^{\rho -1}dr \nu (d\omega )
\end{eqnarray*}
Here the presence of the logarithmic factor does not change the order of magnitude of the last integral with respect to $r$ so we conclude that the integral in (\ref{int.8}) is bounded by 
$$
{\cal O}(1)\mu (\{ 
z\in {\bf C}; |z-|z_j||\le \frac{\epsilon |z_j|}{4C_1},\ |\mathrm{arg\,}z-\mathrm{arg\,}z_j|\le \frac{\epsilon }{4C_1}
\})
$$
and the domains appearing here have at most a fixed finite number of overlaps at each point. Consequently the sum of the logarithmic integrals in (\ref{int.8}) can be bounded from above by ${\cal O}(1)\mu (\widetilde{\gamma }_r)$.

It now follows from Theorem \ref{int1} that the number of zeros of $u$ in 
$[R(\frac{\epsilon }{2C_1}),R]e^{i[\theta ,\vartheta ]}$ is equal to 
$$
\frac{1}{2\pi }(\int _{[R(\frac{\epsilon }{2C_1}),R]e^{i[\theta ,\vartheta ]}}\Delta \phi (z)L(dz)+o(R^\rho ))
$$
when $\epsilon \to 0$, uniformly when $R\ge R(\frac{\epsilon }{2C_1})$. The proposition follows.
\end{proof}

Using (\ref{ent.2}), (\ref{ent.3}), we can write the right hand side of (\ref{ent.6}) as 
$$
\frac{1}{2\pi }R^\rho (o(1)+\rho \int_\theta ^\vartheta g(\omega )d\omega +\frac{1}{\rho }(g'(\vartheta )-g'(\theta )). 
$$

In \cite{Le80} the case of certain variable $\rho $, so called proximate orders, is also treated. We believe that the discussion above can be extended to cover that case.

\end{document}